\newtheorem{lemma}{Lemma}
\newtheorem{coro}{Corollary}
\newtheorem{prop}{Proposition}
\newtheorem{ex}{Example}
\newtheorem{test}{Test}
\newtheorem{defn}{Definition}
\newcommand{\e}{\mathrm{e}} % 2.71..
\newcommand{\E}{\mathbb{E}} % Expected value
\newcommand{\Esp}[1]{\Esp\left(#1\right)} %..?
\renewcommand{\P}{\mathbb{P}} % Probability
\newcommand{\C}{\mathbb{C}} % Complex numbers
\newcommand{\R}{\mathbb{R}} % Real numbers
\newcommand{\N}{\mathbb{N}_0} % Natural numbers
\newcommand{\Npos}{\mathbb{N}_+} % Natural numbers
\newcommand{\Z}{\mathbb{Z}} % Integers
\newcommand{\Lp}{L} % L-p spaces
\newcommand{\PGF}{\mathcal{G}} % Probability generating function
\newcommand{\LT}[1]{\mathcal{L}\{#1\}} % Laplace transform
\newcommand{\LTsub}[1]{\mathcal{L}_{#1}} % Laplace transform subscript
\newcommand{\dd}{\,\mathrm{d}} % dx etc
\newcommand{\Oh}{{\mathcal{O}}} % O(1/n) etc
\newcommand{\defeq}{\equiv}
\newcommand{\eqdistr}{\stackrel{{\scriptstyle \mathcal{D}}}{=}}
\newcommand{\iidDist}{\overset{\mathrm{i.i.d.}}{\sim}}
\newcommand{\iid}{\textbf{iid}\xspace}
\newcommand{\rv}{\textbf{rv}\xspace}
\newcommand{\rvs}{\textbf{rv's}\xspace}
\newcommand{\pdf}{\textbf{pdf}\xspace}
\newcommand{\pmf}{\textbf{pmf}\xspace}
\newcommand{\cdf}{\textbf{cdf}\xspace}
\newcommand{\svf}{\textbf{sf}\xspace}
\newcommand{\slp}{\textbf{slp}\xspace}
\newcommand{\slps}{\textbf{slp's}\xspace}
\newcommand{\cf}{\textbf{cf.}\xspace}
\newcommand{\VaR}{\textbf{VaR}\xspace}
\newcommand{\eg}{\textbf{e.g.}\xspace}
\newcommand{\ie}{\textbf{i.e.}\xspace}
\newcommand{\etal}{\textbf{et al.}\xspace}
\def\timenow{\@tempcnta\time
\@tempcntb\@tempcnta
\divide\@tempcntb60
\ifnum10>\@tempcntb0\fi\number\@tempcntb
:\multiply\@tempcntb60
\advance\@tempcnta-\@tempcntb
\ifnum10>\@tempcnta0\fi\number\@tempcnta}
\begin{document}
\title{Orthogonal polynomial expansions to evaluate stop-loss premiums}
\author{Pierre-Olivier Goffard\footnote{pierre-olivier.goffard@univ-lyon1.fr} }
\author{Patrick J.\ Laub\footnote{patrick.laub@univ-lyon1.fr. Research conducted within the DAMI – Data Analytics and Models for Insurance - Chair under the aegis of the Fondation du Risque, a joint initiative by UCBL and BNP Paribas Cardif.}}
\affil{\footnotesize Univ Lyon, Universit{\'e} Lyon 1, LSAF EA 2429 , Institut de Science Financi{\`e}re et d'Assurances, 50 Avenue Tony Garnier, F-69007 Lyon, France}
\date{\today}
\maketitle
\vspace{3mm}
\begin{abstract}
A numerical method is proposed to evaluate the survival function of a compound distribution and the stop-loss premiums associated with a non-proportional global reinsurance treaty. The method relies on a representation of the probability density function in terms of Laguerre polynomials and the gamma density. We compare the method against a well established Laplace transform inversion technique at the end of the paper.
\end{abstract}
{\it MSC 2010:} 60G55, 60G40, 12E10. \\
{\it Keywords:} Risk theory; orthogonal polynomials; numerical Laplace transform inversion; reinsurance; stop-loss premium.

\section{Introduction}\label{sec:Introduction}

Consider the random variable (\rv)
\begin{equation*}\label{eq:AggregatedClaimAmountsRV}
S_N=\sum_{k=1}^{N}U_{k},
\end{equation*}
where $N$ is a counting \rv and $\{U_{k}\}_{k\in\Npos}$ is a sequence of \rvs which are independent and identically distributed (\iid), non-negative, and independent of $N$.
We denote the probability density function (\pdf) of $S_N$ as $f_{S_N}$, and its survival function (\svf) as
\begin{equation*}\label{eq:DefinitionSurvivalFunction}
\overline{F}_{S_N}(x)= \P(S_N>x),\quad \text{ for } x\geq0.
\end{equation*}
This paper concerns approximations of $f_{S_N}$ and $\overline{F}_{S_N}$, though we begin with a discussion of how $S_N$ is used in actuarial science.

Frequently, $S_N$ models the aggregated losses of a non-life insurance portfolio over a given period of time---here $N$ represents the number of claims and $U_k$ the claim sizes---yet other applications also exist. Actuaries and risk managers typically want to quantify the risk of large losses by a single comprehensible number, a risk measure.

One popular risk measure is the Value-at-Risk (\VaR). In actuarial contexts, the \VaR at level $\alpha \in (0,1)$ is defined such that the probability of (aggregated) losses exceeding the level $\VaR$ is at most $1-\alpha$. We denote this $\alpha$-quantile as
\begin{equation*}
\text{VaR}_{S_N}(\alpha)=\inf\{x\geq0, F_{S_N}(x)\geq \alpha\}.
\end{equation*}
Following the European recommendation of the Solvency II directive, the standard value for $\alpha$ is $0.995$, see \cite{EIOPA}. It is used by risk managers in banks, insurance companies, and other financial institutions to allocate risk reserves and to determine solvency margins. Also, we have stop-loss premiums (\slps) which are risk measures that are commonly used in reinsurance agreements.

A reinsurance agreement is a common risk management contract between insurance companies, one called the ``cedant'' and the other the ``reinsurer''. Its aim is to keep the cedant's long-term earnings stable by protecting the cedant against large losses. The reinsurer absorbs part of the cedant's loss, say $f(S_N)$ where $0\leq f(S_N)\leq S_N$, leaving the cedant with $I_{f}(S_N)=S_N-f(S_N)$. In return, the cedant pays a premium linked to
\begin{equation*}\label{eq:DefinitionReinsurancePremium}
\Pi=\E[f(S_N)],
\end{equation*}
under the expected value premium principle.

In practice, there are a variety of reinsurance designs from which an insurer can choose. We focus in this work on the stop-loss reinsurance treaty associated with the following ceded loss function
\begin{equation*}\label{eq:StopLossCededFunction}
f(S_N)=(S_N-a)_{+},\text{ }a\geq0,
\end{equation*}
where $a$ is referred to as the retention level or priority. The ratemaking of the stop-loss reinsurance policy requires the evaluation of
\begin{equation}\label{eq:DefUsualStopLossPremium}
\Pi_{a}(S_N)=\E\left[(S_N-a)_{+}\right],
\end{equation}
also known as the usual stop loss premium (\slp).

One variation is the limited stop-loss function,
\begin{equation}\label{eq:LimitedStopLossCededFunction}
f(S_N)=\min[(S_N-a)_{+},b],\text{ }b\geq0,
\end{equation}
where $b$ is called the limit. The limited stop-loss function \eqref{eq:LimitedStopLossCededFunction} is very appealing in practice because it prevents the cedant from over-estimating their losses and therefore over-charging the reinsurer.
Also, the change-loss function is defined as
\begin{equation*}\label{eq:ChangeLossCededFunction}
f(S_N)=c(S_N-a)_{+},\text{ }0\leq c\leq1,
\end{equation*}
which is in between stop-loss and quota-share reinsurance. The ratemaking in each case requires the expectation in \eqref{eq:DefUsualStopLossPremium}.

From a practical point of view, a reinsurance treaty over the whole portfolio is less expensive to handle than one which involves claim-by-claim management. It also grants protection in the event of an unusual number of claims, triggered for instance by a natural disaster. From a theoretical point of view, it is well known that the stop-loss ceded function allows one to minimize the variance of the retained loss for a given premium level, see for instance the monograph of Denuit \etal \cite{DeDhGoKa06}. Recently, it has been shown that stop-loss reinsurance is also optimal when trying to minimize the \VaR and the expected shortfall of the retained loss, see the works of Cai \etal \cite{CaTaWeZh08}, Cheung \cite{Ch10}, and Chi and Tan \cite{ChTa11}. Note that some other ceded loss functions appear in their work, they are however very close to the stop-loss one.

Unfortunately, one is seriously constrained when calculating these quantities analytically, as there are only a few cases where either the \pdf or the \svf is available in a simple tractable form. To compute the \VaR or \slp we must find fast and accurate approximations for these functions.

We discuss the use of an approximation of the \pdf in terms of the gamma density and its orthonormal polynomials. This method has been studied in the recent works of Goffard \etal \cite{GoLoPo15} and Jin \etal \cite{JiPrRe16}, though it goes back to Bowers \cite{bowers1966expansion} at least. We emphasize here the computational aspect of this numerical method and detail some practical improvements. An exponential change of measure can be used to recover the \pdf of $S_N$ when the claim sizes are governed by a heavy-tailed distribution. Introducing an exponential change of measure has been successfully applied in many previous works, \eg, in the work of Asmussen \etal \cite{asmussen2016orthonormal} to recover the density of the sum of lognormally distributed random variables and earlier by Mnatsakanov \etal \cite{Mnatsakanov2008} to calculate ruin probabilities.

This method is compared to a numerical inversion of the Laplace transform which is known to be efficient to recover the survival function of a compound distribution. The critical step in Laplace inversion is to select which numerical integration technique to apply. We implement a method inspired by the work of Abate and Whitt \cite{Abate1992} which is very similar to the method of Rolski \etal \cite[Chapter 5, Section 5]{RoScScTe08}. An approximation of the \slp is then proposed relying on the connection with the survival function of the equilibrium distribution of $S_N$. Note that Dufresne \etal \cite{DuGaMo09} successfully applied a Laplace inversion based technique to the evaluation of \slp. We chose not to include Panjer's algorithm or the Fast Fourier Transform methodology in the comparison study because they both require the discretization of the claim sizes distribution and also because they have been already compared in the work of Embrechts and Frei \cite{embrechts2009panjer}.

To close this section, we want to emphasize the fact that the numerical methods also apply in a risk theory framework. The infinite-time ruin probability in the compound Poisson ruin model is equal to the survival function of a compound geometric distribution. The polynomial approximation and the Laplace inversion methods have been employed, and compared to solve this particular problem in the work of Goffard \etal \cite{GoLoPo16}. We add a more original application by noting that the finite-time non-ruin probability with no initial reserves, again under the classical risk model assumptions, may be rewritten as the \slp associated with a compound Poisson distribution where the priority is expressed in terms of the premium rate and the time horizon.

The rest of the paper is organized as follows. Section \ref{sec:Preliminaries} introduces compound distributions, and details their role in risk theory. Section \ref{sec:PolynomialApproximation} presents the approximation method based on orthogonal polynomials. Section \ref{sec:NumericalInversionLaplaceTransform} presents the approximation through the numerical inversion of the Laplace transform. Section \ref{sec:NumericalIllustrations} is devoted to numerical illustrations where the performances of the two methods are compared; the \textsc{Mathematica} code used in this section is available online \cite{StoplossCode}.

\section{Compound distributions and risk theory}\label{sec:Preliminaries}

After setting up some notational conventions for Laplace transforms, see Definition \ref{def:TransformDefs}, compound distributions are introduced along with a brief account of their importance in risk modeling.
\begin{defn} \label{def:TransformDefs} For a function $f : \R_+ \to \R_+$, we define
\[
	\LT{f}(t) \defeq \int_{0}^{\infty} \e^{-t x} f(x) \dd x\,, \quad \text{for } t \in \C \text{ with } \Re(t) \ge 0 \,, \\
\]
to be the corresponding Laplace transform.
For a positive random variable $X$ with \pdf $f_X$, we write $\LTsub{X}(t) \defeq \LT{f_X}(t) = \E\,\e^{-tX}$. \hfill $\diamond$
\end{defn}

Note that
\[ \LT{ F_X }(t) = \frac{\LT{f_X}(t)}{t} = \frac{\LTsub{X}(t)}{t} \,, \text{ and } \]
\[ \LT{ \overline{F}_X }(t) = \frac1t - \LT{F_X(x) }(t) = \frac{1 - \LTsub{X}(t)}{t}  \,, \text{ for } t>0.
\]

\subsection{Compound distribution}
Let $S_N=\sum_{k=1}^{N}U_k$ be the aggregated claim amounts associated with a non-life insurance portfolio over a fixed time period. The number of claims, also called the claim frequency, is modeled by a counting random variable $N$ having a probability mass function $f_N$. The claim sizes form a sequence $\{U_k\}_{k\in \Npos}$ of \iid non-negative random variables with common \pdf $f_{U}$. We further assume that the claim sizes are independent from the claim frequency.

As $S_N=0$ whenever $N=0$ (assuming this occurs with positive probability), the distribution of $S_N$ is the sum of a singular part (the probability mass $\P(S_N=0)=f_N(0)>0$) and a continuous part (describing $S_N$ where $N>0$) with a defective \pdf $f_{S_N}^+$ and \cdf $F_{S_N}^+$. From the law of total probability, we have
\begin{equation}\label{eq:DefectivePDFCompoundDistribution}
f_{S_N}^+(x)=\sum_{n=1}^{\infty}f_N(n)f_{U}^{\ast n}(x),~x\geq0.
\end{equation}

This density is intractable because of the infinite series. Furthermore, the summands are defined by repeated convolution of $f_{U}$ with itself which are rarely straightforward to evaluate. The methods presented in this work rely on the knowledge of the Laplace transform of $S_N$, given by
\begin{equation*}\label{eq:TransformsForCompoundDistribution}
\LTsub{S_N}(t) = \PGF_N[ \LTsub{U}(t) ] \,,
\end{equation*}
where $\PGF_N(t) \defeq \E (t^N)$ is the probability generating function of $N$. The simple expression of the Laplace transform has made possible the use of numerical methods involving the moments or transform inversion to recover the distribution of $S_N$. The distribution is typically recovered using Panjer's algorithm or a Fast Fourier Transform algorithm based on the inversion of the discrete Fourier transform; these two methods are compared in the work of Embrechts and Frei \cite{embrechts2009panjer}. Our orthogonal polynomial method involves the standard integer moment sequence for $S_N$, in contrast to more exotic types of moments used by some recent methods. Gzyl and Tagliani \cite{GzTa12} uses the fractional moments within a max-entropic based method, while Mnatsakanov and Sarkisian \cite{Mnatsakanov2013} performs an inversion of the scaled Laplace transform via the exponential moments. In addition to proposing an approximation for the survival function of $S_N$, we provide an efficient way to compute the usual \slp \eqref{eq:DefUsualStopLossPremium} for reinsurance applications.

\subsection{Risk theory}
In the classical risk model, the financial reserves of a non-life insurance company are modeled by the risk reserve process $\{R(t),t\geq0\}$, defined as
\begin{equation*}\label{eq:RiskReserveProcess}
R(t)=u+ct-\sum_{k=1}^{N(t)}U_k.
\end{equation*}
The insurance company holds an initial capital of amount $R(0)=u\geq0$, and collects premiums at a constant rate of $c>0$ per unit of time. The number of claims up to time $t\geq0$ is governed by a homogeneous Poisson process $\{N(t),t\geq0\}$ with intensity $\lambda$. The claim sizes are \iid non-negative random variables independent from $N(t)$.

One of the goals of risk theory is to evaluate an insurer's ruin probability, that is, the probability that the financial reserves eventually fall below zero. Of interest are both the finite-time ruin probability $\psi(u,T)$ and the infinite-time ruin probability, also called the \textit{probability of ultimate ruin}, $\psi(u)$, which are defined as
\begin{equation*}\label{eq:FiniteTimeRuinProbability}
\psi(u,T)=\P\Big( \inf_{0 \le t \le T} R(t) \leq 0 \Big),
\end{equation*}
and
\begin{equation*}\label{eq:InfiniteTimeRuinProbability}
\psi(u)=\P\Big( \inf_{t \geq 0}\,R(t) \leq 0 \Big).
\end{equation*}
For a general background on risk theory and the evaluation of ruin probabilities, we refer the reader to the monograph of Asmussen and Albrecher \cite{asmussen2010ruin}.

The first connection between compound distributions and ruin probabilities is the following.
If the net benefit condition is satisfied, \ie if the premium rate exceeds the average cost of aggregated claims per unit of time, then the infinite-time ruin probability is given by the survival function of a geometric compound distribution. More precisely,
\begin{equation*}\label{eq:PollaczeckKhinchineFormula}
\psi(u) = \P\left(S_N \defeq \sum_{k=1}^{N}U^\ast_{k}>u\right)
= (1 - \rho) \sum_{n=1}^\infty \rho^n \overline{F}_{U^\ast}^{\ast n}(u),
\end{equation*}
with $N \sim \mathsf{Geom}_0(\rho)$, $\rho = \lambda\E(U)/c<1$, and with \iid $U^\ast_{k}$ with \pdf $f_{U^\ast}(x)= \overline{F}_U(x) / \E(U)$. This result is known as the Pollaczeck--Khinchine formula, see for instance Asmussen and Albrecher \cite[Chapter IV, (2.2)]{asmussen2010ruin}. Thus it is possible to evaluate the infinite-time ruin probability via Panjer's algorithm. If we are able to determine the Laplace transform of $S_N$ then we can also apply the polynomial method of Goffard \etal \cite{GoLoPo15}, the fractional moment based method of Gzyl \etal \cite{GzNITa13}, and the exponential moments based method of Mnatsakanov \etal \cite{Mnatsakanov2015}.

The second connection links the finite-time ruin probability with no initial reserves to the \slp associated with a compound distribution.
If $N(t) \sim \mathsf{Poisson}(\lambda t)$ (\ie claims arrive as a homogeneous Poisson process) then the finite-time ruin probability is given by
\begin{align} \label{CramersFormula}
\psi(0, T)
&= 1 - \frac{1}{cT} \int_0^{cT} \P \left( \sum_{i=1}^{N(T)} U_i \le x \right) \dd x \,.
\end{align}
This implies $\psi(0,T) = \E [ \min\{S_{N(T)}, cT \} ]/cT$ where $S_{N(T)} \defeq \sum_{i=1}^{N(T)} U_i$, and hence
\begin{align} \label{eq:ConnectionFiniteTimeRuinProbabilityStopLossPremium}
\psi(0, T)
&= (cT)^{-1} \Big[  \E[N(T)] \, \E[U_1] - \Pi_{cT}(S_{N(T)}) \Big] \,.
\end{align}
Lef\`evre and Picard \cite[Corollary 4.3]{LePi11} show that equations \eqref{CramersFormula} and \eqref{eq:ConnectionFiniteTimeRuinProbabilityStopLossPremium} hold in the more general case where the claim arrival process forms a \textit{mixed Poisson process}. This connection has been exploited recently in Lef\`evre \etal \cite{LeTrZu17} where the influence of the claim size distribution on the ruin probabilities is studied via stochastic ordering considerations.

\section{Orthogonal polynomial approximations}\label{sec:PolynomialApproximation}

Orthogonal polynomials have been used at multiple occasions in applied probability and statistics, for instance in the study of stochastic processes in the textbook of Schoutens \cite{Sc12} and in the derivation of summation formulas in Diaconis and Zabell \cite{DiZa91}. In this work, we use them to derive an approximation formula to recover an unknown probability measure from the knowledge of its moments.

\subsection{Approximating general density functions} \label{ssec:PolynomialApproxIntro}
 Let $X$ be an arbitrary random variable with \pdf $f_X$ with respect to some measure $\lambda$ (typically Lebesgue measure on an interval or counting measure on a subset of $\Z$). We assume that the density is unknown and we propose an approximation of the form
\begin{equation}\label{eq:PolynomialApproximation}
\widehat{f}_{X}(x)=\sum_{k=0}^{K}q_{k}Q_{k}(x)f_{\nu}(x),
\end{equation}
where $f_{\nu}$ is the reference or basis density, associated to a probability measure $\nu$ absolutely continuous with respect $\lambda$. The sequence $\{Q_{k},k \geq 0\}$ is made of polynomials, orthonormal with respect to $\nu$ in the sense that
\begin{equation*}\label{eq:OrthonormalProperty}
\left<Q_{k},Q_{l}\right>_\nu =\int Q_{k}(x)Q_{l}(x)\dd\nu(x)= \delta_{k,l},\text{ }k,l\in\N.
\end{equation*}
This sequence is generated by the Gram--Schmidt orthogonalization procedure %applied to the monomials $\{x^{k},k \geq 0\}$,
which is only possible if $\nu$ admits moments of all orders. If additionally there exists $s>0$ such that
\begin{equation}
\int \e^{s |x|}\dd\nu(x)<\infty,\nonumber
\end{equation}
then the sequence of polynomials $\{Q_{k},k \geq 0\}$ forms an orthonormal basis of $\Lp^2(\nu)$ which is the space of all square integrable functions with respect to $\nu$, see the monograph by Nagy \cite[Chapter 7]{Na65}. Therefore, if $f_{X}/f_{\nu}\in \Lp^2(\nu)$ then the polynomial representation of the density of $X$ with respect to $\nu$ follows from orthogonal projection, namely we have
\begin{equation}\label{eq:PolynomialRepresentationTheory}
f_{X}(x)/f_{\nu}(x)=\sum_{k=0}^{\infty}\left<f_{X}/f_{\nu} , Q_{k}\right>_\nu Q_{k}(x).
\end{equation}
We label the coefficients of the expansion as $\{q_{k},k \geq 0\}$, noting that
\begin{equation}
q_k\defeq \left<f_{X}/f_{\nu} , Q_{k}\right>_\nu =\int Q_{k}(x)f_{X}(x)\frac{\dd \nu(x)}{f_\nu(x)} =\E\left[Q_{k}(X)\right], \text{ }k\in \N,\nonumber
\end{equation}
and thus we can rearrange \eqref{eq:PolynomialRepresentationTheory} to be
\begin{equation}\label{eq:PolynomialRepresentation}
f_{X}(x)=\sum_{k=0}^{\infty} q_k Q_{k}(x) f_{\nu}(x).
\end{equation}
The approximation \eqref{eq:PolynomialApproximation} follows by simply truncating the series to $K+1$ terms.

The Parseval relationship, $\sum_{k=1}^{\infty}q_{k}^{2}=||f_{X}/f_{\nu}||_\nu^{2}$, ensures that the sequence $\{q_{k},k \geq 0\}$ tends toward $0$ as $k$ tends to infinity. The accuracy of the approximation \eqref{eq:PolynomialApproximation}, for a given order of truncation $K$, depends on how swiftly these coefficients decay. The $\Lp^2$ loss associated with the approximation of $f_{X}/f_{\nu}$ is $\sum_{k=K+1}^{\infty}q_{k}^{2}$.

Typical choices of reference distributions are ones that belong to a Natural Exponential Family with Quadratic Variance Function (NEF-QVF) which includes the normal, gamma, hyperbolic, Poisson, binomial, and Pascal distributions. This family of distributions is convenient as the associated orthogonal polynomials are classical, see the characterization by Morris \cite{Mo82} and see also the extension by Letac and Mora \cite{LeMo90} to the case of Natural Exponential families with Cubic Variance Function. The polynomials are known explicitly, thus we avoid the time-consuming Gram--Schmidt orthogonalization procedure. Furthermore, it has been shown in a paper by Provost \cite{Pr05} that the recovery of unknown densities from the knowledge of the moments of the distribution naturally leads to approximation in terms of the gamma density and Laguerre polynomials when $X$ admits $\R_{+}$ as support, and in terms of the normal density and Hermite polynomials when $X$ has $\R$ as support.

\subsection{Approximating densities of positive random variables} \label{ssec:PolynomialApproxPositive}

To approximate the \pdf for positive $X$, a natural candidate for the reference density is the gamma density. It has been proven to be efficient in practice, see the work of Goffard \etal \cite{GoLoPo15,GoLoPo16}, and Jin \etal \cite{JiPrRe16}. The work of Papush \etal \cite{PaPaPo01} showed that among the gamma, normal and lognormal distributions, the gamma distribution seems to be better suited to model certain aggregate losses. The lognormal distribution is a problematic choice. Even though the orthogonal polynomials are available in a closed form (c.f.\ Asmussen \etal \cite{asmussen2016orthonormal}) they do not provide a complete orthogonal system of the $\Lp^2$ space. The case of the inverse Gaussian as basis received a treatment in the work of Nishii \cite{Ni96}, where it is shown that the only way to get a complete system of polynomials is by using the Gram--Schmidt orthogonalization procedure. Differentiating the density (as it is done in the case of NEF-QVF) does not lead to an orthogonal polynomial system, and starting from the Laguerre polynomials leads to a system of orthogonal functions which is not complete. A solution might be to exploit the bi-orthogonality property pointed out in the work of Hassairi and Zarai \cite{HaZa04}. To close this review of reference densities, we mention the work of Nadarajah \etal \cite{NaChJi16} where Weibull and exponentiated exponential distributions are considered as reference density.

The $\mathsf{Gamma}(r,m)$ distribution, where $r$ is the shape parameter and $m$ is the scale parameter, has a \pdf
\begin{equation*}\label{eq:PDFGamma}
f_\nu(x)\defeq\gamma(r,m,x)=\frac{x^{r-1}\e^{-\frac{x}{m}}}{\Gamma(r)m^{r}},\text{ }x\in\R^{+},
\end{equation*}
where $\Gamma(\cdot)$ denotes the gamma function.\footnote{For the distributions in this paper, we use \textsc{Mathematica}'s parametrization, \eg the exponential and Erlang distributions are $\mathsf{Exp}(\lambda) = \mathsf{Gamma}(1, 1/\lambda)$ and $\mathsf{Erlang}(n, m) = \mathsf{Gamma}(n, 1/m)$.}
The associated orthonormal polynomials are given by
\begin{equation*}\label{eq:GeneralizedLaguerrePolynomials}
Q_{n}(x)
=(-1)^{n} \binom{n + r - 1}{n}^{-\frac12} L_{n}^{r-1}\big(\frac{x}{m}\big)
=(-1)^{n} \left( \frac{\Gamma(n+r)}{\Gamma(n+1)\Gamma(r)} \right)^{-\frac12} L_{n}^{r-1}\big(\frac{x}{m}\big),
\end{equation*}
where $\{L_{n}^{r-1},n\geq0\}$ are the generalized Laguerre polynomials,
\begin{equation*}\label{eq:GeneralizedLaguerrePolynomialsExpression}
L_{n}^{r-1}(x)
=\sum_{i=0}^{n} \binom{n + r - 1}{n - i} \frac{(-x)^i}{i!}
=\sum_{i=0}^{n} \frac{\Gamma(n+r)}{\Gamma(n-i+1)\Gamma(r+i)}\frac{(-x)^i}{i!}, \text{ }n\geq 0,
\end{equation*}
\cf the classical book by Szeg{\"o} \cite{Szegoe1939}.

\begin{lemma}\label{Lemma:PseudoMixtureOfGammas}
If $\nu$ is $\mathsf{Gamma}(r,m)$ and $f_{X}/f_{\nu}\in \Lp^2(\nu)$, then the polynomial expansion \eqref{eq:PolynomialRepresentation} may be rewritten as
\begin{align}\label{eq:PseudoMixtureOfGammas}
f_{X}(x)&= \sum_{i=0}^{\infty} p_i \gamma(r+i,m,x),
\end{align}
where
\begin{equation}\label{eq:PiExpression}
p_i
=\sum_{k=i}^\infty q_k \frac{(-1)^{i+k}}{i! \, (k-i)!} \sqrt{\frac{k! \Gamma(k+r)}{\Gamma(r)}},
\end{equation}
and the function $\gamma(r,m,x)$ is the \pdf of the $\mathsf{Gamma}(r,m)$ distribution.
\end{lemma}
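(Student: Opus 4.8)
The plan is to reduce the identity \eqref{eq:PseudoMixtureOfGammas} to a statement about the individual basis functions $Q_k(x)\,f_\nu(x)$: I would show that each such function is a \emph{finite} linear combination of the shifted gamma densities $\gamma(r+i,m,x)$ with $0\le i\le k$, and then resum against the coefficients $q_k$ from \eqref{eq:PolynomialRepresentation}. The only ingredient beyond bookkeeping is the elementary identity
\[
x^{i}\,\gamma(r,m,x)\;=\;\frac{\Gamma(r+i)}{\Gamma(r)}\,m^{i}\,\gamma(r+i,m,x),\qquad i\in\N,
\]
which follows at once from the closed form of the gamma density.

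First I would insert the explicit expansion of $L_k^{r-1}$ (in the form given just before the lemma) into the definition of $Q_k$, obtaining
$Q_k(x)\,f_\nu(x)=(-1)^k\binom{k+r-1}{k}^{-1/2}\sum_{i=0}^{k}\binom{k+r-1}{k-i}\frac{(-1)^i}{i!\,m^{i}}\,x^{i}\,\gamma(r,m,x)$,
and then substitute the identity above. The powers of $m$ cancel, and the surviving coefficient $\binom{k+r-1}{k}^{-1/2}\binom{k+r-1}{k-i}\,\Gamma(r+i)/\Gamma(r)$ collapses, once the binomial coefficients are rewritten through Gamma functions and the factors of $\Gamma(r+i)$ and $\Gamma(k+r)$ are cancelled, to $\tfrac{1}{(k-i)!}\sqrt{k!\,\Gamma(k+r)/\Gamma(r)}$. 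This gives
\[
Q_k(x)\,f_\nu(x)\;=\;\sum_{i=0}^{k}\frac{(-1)^{i+k}}{i!\,(k-i)!}\sqrt{\frac{k!\,\Gamma(k+r)}{\Gamma(r)}}\;\gamma(r+i,m,x).
\]
Plugging this into $f_X=\sum_{k\ge0}q_k\,Q_k f_\nu$ and interchanging the order of summation via $\sum_{k\ge0}\sum_{i=0}^{k}=\sum_{i\ge0}\sum_{k\ge i}$, the coefficient of $\gamma(r+i,m,x)$ is precisely the $p_i$ of \eqref{eq:PiExpression}, which proves \eqref{eq:PseudoMixtureOfGammas}.

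The step I expect to be the genuine obstacle is the justification of the interchange of the two infinite sums: the summands are not of one sign, and the hypothesis $f_X/f_\nu\in\Lp^2(\nu)$ only yields $\sum_k q_k^2<\infty$, which is not by itself enough for unconditional rearrangement given the super-exponentially growing combinatorial weights $\tfrac{1}{i!(k-i)!}\sqrt{k!\,\Gamma(k+r)/\Gamma(r)}$. I would deal with this by first carrying out the manipulation on the partial sums — a purely finite operation giving $\sum_{k=0}^{K}q_k Q_k(x)f_\nu(x)=\sum_{i=0}^{K}p_i^{(K)}\gamma(r+i,m,x)$ with $p_i^{(K)}=\sum_{k=i}^{K}q_k\tfrac{(-1)^{i+k}}{i!(k-i)!}\sqrt{k!\,\Gamma(k+r)/\Gamma(r)}$ — and then letting $K\to\infty$. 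Convergence of $p_i^{(K)}\to p_i$ and of $\sum_i p_i\gamma(r+i,m,x)$ to $f_X$ must then be secured either pointwise in $x$ under a mild decay condition on $(q_k)$ or in the $\Lp^2(\nu)$ sense in which \eqref{eq:PolynomialRepresentation} already holds; I would state explicitly whatever absolute-summability hypothesis (stronger than membership in $\ell^2$) makes Fubini for series applicable, while remarking that for the truncated expansion actually used in computation the question is moot.
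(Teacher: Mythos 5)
Your proposal is correct and follows essentially the same route as the paper's proof: both rearrange the Laguerre expansion into gamma densities via the identity $x^{i}\gamma(r,m,x)=m^{i}\,\Gamma(r+i)/\Gamma(r)\,\gamma(r+i,m,x)$ and then collect coefficients, the only cosmetic difference being that the paper first gathers the coefficients of the monomials $x^{i}$ while you expand each $Q_{k}f_{\nu}$ directly into gamma densities before swapping the sums. Your observation that the interchange of the two infinite summations needs a justification beyond $\sum_{k}q_{k}^{2}<\infty$ is a fair point of rigor that the paper passes over silently, but it does not alter the method.
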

\begin{proof}
If we change the sum in \eqref{eq:PolynomialRepresentation} from iterating over Laguerre polynomials to iterating over monomials we get
\[
f_{X}(x)=\sum_{k=0}^{\infty} q_k Q_{k}(x) \gamma(r,m,x) = \sum_{i=0}^\infty c_i x^i \gamma(r,m,x) \, ,
\]
where
\begin{align*}
c_i
&= \sum_{k=0}^\infty \text{Coefficient}(x^i, q_k  Q_k(x))
= \frac{(-1)^i}{m^i i!} \sum_{k=i}^\infty q_k (-1)^{k} \binom{k + r - 1}{k}^{-\frac12} \binom{k + r - 1}{k - i} \,.
\end{align*}
We also note that
\[ x^i \gamma(r, m, x) = m^i \frac{\Gamma(r+i)}{\Gamma(r)} \gamma(r+i, m, x), \]
so
\[ f_{X}(x) = \sum_{i=0}^\infty c_i m^i \frac{\Gamma(r+i)}{\Gamma(r)} \gamma(r+i, m, x) =  \sum_{i=0}^\infty p_i \gamma(r+i, m, x),  \]
where we have set $p_i = c_i m^i \Gamma(r+i) / \Gamma(r)$.
\end{proof}

A sufficient condition for $f_{X}/f_{\nu}\in\Lp^2(\nu)$ is
\begin{equation}\label{eq:IntegrabilityCondition}
f_{X}(x)=
\begin{cases}
\Oh(\e^{- x/\delta})&\text{ as }x\rightarrow\infty\text{ with }m>\delta/2,\\
\Oh(x^{\beta})&\text{  as }x\rightarrow0\text{ with } r < 2( \beta + 1).
\end{cases}
\end{equation}
When $X$ has a well-defined moment generating function one can typically choose $r$ and $m$ so this integrability condition is satisfied. To be specific, define the radius of convergence of a random variable $X$ as
$$
\rho_X \defeq \sup\{s>0, ~ \LTsub{X}(-s)<\infty\},
$$
and consider the following result.

\begin{prop}\label{prop:ExpBound}
Say that $X$ is a \rv whose radius of convergence $\rho_X$ exists and whose density $f_X(x)$ is decreasing for $x>a$, then
\begin{equation}\label{eq:GrandOExp}
f_X(x) = \Oh(\exp\{- x \rho_X\}), \quad \text{as } x \to \infty \,.
\end{equation}
\end{prop}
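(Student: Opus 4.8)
The plan is to exploit the monotonicity of $f_X$ on $(a,\infty)$ to convert the integrability of the exponential moment $\E[\e^{sX}] = \LTsub{X}(-s)$, which is finite for every $s \in (0,\rho_X)$ by definition of $\rho_X$, into a pointwise tail bound on $f_X$. The mechanism: once $f_X$ is decreasing, the contribution of the interval $(a,x)$ to $\int_0^\infty \e^{sy} f_X(y)\,\dd y$ is at least $f_X(x)$ times $\int_a^x \e^{sy}\,\dd y$, and the latter grows like $\e^{sx}$; since the full integral is a finite constant, $f_X(x)$ is forced to decay at least as fast as $\e^{-sx}$.

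Concretely, I would fix $s \in (0,\rho_X)$ and set $M_s := \LTsub{X}(-s) = \int_0^\infty \e^{sy} f_X(y)\,\dd y < \infty$. For $x > a$, the fact that $f_X$ is decreasing on $(a,\infty)$ gives $f_X(y) \ge f_X(x)$ for all $y \in (a,x)$, whence
\[
M_s \;\ge\; \int_a^x \e^{sy} f_X(y)\,\dd y \;\ge\; f_X(x)\int_a^x \e^{sy}\,\dd y \;=\; f_X(x)\,\frac{\e^{sx}-\e^{sa}}{s}.
\]
Rearranging yields $f_X(x) \le s M_s/(\e^{sx}-\e^{sa})$, and for $x \ge a + s^{-1}\ln 2$ one has $\e^{sx}-\e^{sa} \ge \tfrac12 \e^{sx}$, so $f_X(x) \le 2sM_s\,\e^{-sx}$ for all such $x$. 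This is exactly $f_X(x) = \Oh(\e^{-sx})$ as $x\to\infty$.

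The one delicate point is the exponent at the boundary: the argument as above delivers the rate $s$ for each fixed $s<\rho_X$, and to reach the rate $\rho_X$ itself one needs $\LTsub{X}(-\rho_X)<\infty$, in which case the same computation with $s=\rho_X$ goes through verbatim. In general \eqref{eq:GrandOExp} should be read as holding for every decay rate strictly below $\rho_X$, which is all that is needed downstream: to satisfy the integrability condition \eqref{eq:IntegrabilityCondition} one takes $\delta = 1/s$ for some $s<\rho_X$ chosen close enough to $\rho_X$ that $m>\delta/2$ (possible as soon as $m>1/(2\rho_X)$). I would also emphasize that the ``eventually decreasing'' hypothesis is essential — without it $f_X$ may carry arbitrarily tall, thin spikes arbitrarily far out in the tail while keeping every $\LTsub{X}(-s)$ finite, so no pointwise tail estimate of this kind can hold.
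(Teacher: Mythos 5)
Your proof is correct and follows essentially the same route as the paper: for each $s\in(0,\rho_X)$ the finiteness of $\LTsub{X}(-s)$ together with the eventual monotonicity of $f_X$ forces $f_X(x)\le C_s\,\e^{-sx}$, the only difference being that you lower-bound $\int_a^x \e^{sy}f_X(y)\dd y$ by $f_X(x)\,(\e^{sx}-\e^{sa})/s$ directly, whereas the paper gets the analogous bound via integration by parts, dropping the nonpositive term coming from the decreasing $f_X$. Your closing caveat is also well placed: the paper's constant $s\LTsub{X}(-s)+f_X(a)\e^{sa}$ likewise depends on $s$ and may blow up as $s\nearrow\rho_X$, so the stated rate $\rho_X$ should indeed be read as ``every rate strictly below $\rho_X$'' unless $\LTsub{X}(-\rho_X)<\infty$, which is all that is needed to satisfy the integrability condition \eqref{eq:IntegrabilityCondition} when $m>1/(2\rho_X)$.
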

\begin{proof}
Let $x>a$ and $s\in(0,\rho_{S_N})$ then
\begin{align*}
\LTsub{X}(-s)\
&\ge\ \int_{a}^{x}\e^{sy}f_X(y)\text{d}y\\
&=\ \frac{1}{s}\left[f_X(x)\e^{sx}-f_X(a)\e^{sa}\right]-\frac{1}{s}\int_{a}^{x}\e^{sy}f_X(y)\text{d}y\\
&\ge\ \frac{1}{s}\left[f_X(x)\e^{sx}-f_X(a)\e^{sa}\right].
\end{align*}
The \pdf $f_X$ is bounded from above with
$$
f_X(x) \le \left[s\LTsub{X}(-s)+f_X(a)\e^{sa}\right]\e^{-sx},
$$
which is equivalent to \eqref{eq:GrandOExp} when taking $s \nearrow \rho_X$.
\end{proof}

Proposition~\ref{prop:ExpBound} implies that for the \rvs whose densities are eventually decreasing, the first integrability condition (concerning the right tail) is satisfied if $m>1/(2 \rho_X)$.

When we consider heavy-tailed distributions, which is a desirable model characteristic in the applications, the integrability condition cannot be satisfied. The work-around is to use the expansion
\[f_{\theta}(x)\defeq \frac{\e^{-\theta x}}{\LTsub{X}(\theta)} f_{X}(x)
= \sum_{k=0}^\infty q_k Q_k(x) f_\nu(x), \]
for some $\theta > 0$. Thus, we can use
\begin{eqnarray}
f_{X}(x)
&=& \e^{\theta x} \LTsub{X}(\theta) \sum_{k=0}^\infty q_k Q_k(x) f_\nu(x) \nonumber\\
&=& \e^{\theta x} \LTsub{X}(\theta) \sum_{i=0}^{\infty} p_i \gamma(r+i,m,x) \label{eq:polynomial_expansion_exponentially_tilted_pdf}
\end{eqnarray}
and since, when $1-m \theta > 0$,
\[ \e^{\theta x} \gamma(r+i, m, x) = (1-m \theta)^{-(r+i)} \gamma\big(r+i, \frac{m}{1-m\theta}, x \big)  \]
we have
\begin{align*}
f_{X}(x)
= \LTsub{X}(\theta) \sum_{i=0}^{\infty} p_i (1-m \theta)^{-(r+i)} \gamma\big(r+i, \frac{m}{1-m\theta}, x \big)
= \sum_{i=0}^{\infty} \widetilde{p}_i \gamma\big(r+i, \widetilde{m}, x \big),
\end{align*}
where
\[ \widetilde{p}_i = \frac{\LTsub{X}(\theta) p_i }{(1-m \theta)^{r+i}}
\quad\text{and}\quad
\widetilde{m} = \frac{m}{1-m\theta} \,.
\]
Calculating the $q_i$'s and $p_i$'s, a topic covered in Section~\ref{sssec:ComputingOrthogonalCoefficients}, requires a Laplace transform of $f_{\theta}(x)$ which is given by
\begin{align*}
\LT{ f_{\theta} }(t)
= \frac{\LTsub{X}(t+\theta)}{\LTsub{X}(\theta)}.
\end{align*}

The method described above approximates the exponentially tilted distribution.
This idea has been used in Asmussen \etal \cite{asmussen2016orthonormal} and Kang \etal \cite{KaPrRe2019}.
It is easily seen that taking $m > 1/(2\theta)$ implies that $(\e^{-\theta x} f_{X}(x))/f_{\nu}(x)\in\Lp^2(\nu)$. The ability to model claim sizes with heavy-tailed distributions is an improvement compared to Goffard \etal \cite{GoLoPo15,GoLoPo16} where only light-tailed distributions could be handled.

The expression of the \pdf in \eqref{eq:PseudoMixtureOfGammas} and \eqref{eq:polynomial_expansion_exponentially_tilted_pdf} resemble the one of an Erlang mixture, which are extensively used for risk modeling purposes, \cf Willmot and Woo \cite{WiWo07}, Lee and Lin \cite{LeLi10}, and Willmot and Lin \cite{WiLi11}. However, the $p_i$'s defined in \eqref{eq:PiExpression} do not form a proper probability mass function as they are not always positive. Hence our approximation cannot be considered as an approximation through an Erlang mixture although it enjoys the same features when it comes to approximating the survival function and the \slp as shown in the following result.
\begin{prop} \label{prop:OrthogonalPolynomialForm}
Letting $\Gamma_u(r,m,x)$ be the \svf of the $\mathsf{Gamma}(r,m)$ distribution,  we have:
\begin{itemize}
\item[(i)] the \svf of $X$ is given by
\begin{equation}\label{eq:TailFunctionX}
\overline{F}_{X}(x) = \sum_{i=0}^{\infty}p_i\Gamma_{u}(r+i,m,x) \quad \text{for } x \ge 0 \,,
\end{equation}
\item[(ii)] the usual \slp of $X$ with priority $a \ge 0$ is given by
\begin{equation}
\E\left[\left(X-a\right)_{+}\right] = \sum_{i=0}^{\infty}p_i\left[m (r+i) \Gamma_{u}(r+i+1,m,a)-a\Gamma_{u}(r+i,m,a)\right]. \label{eq:UsualStoplossX}
\end{equation}
\end{itemize}
\end{prop}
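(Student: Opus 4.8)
The plan is to read both identities off the pseudo-mixture representation $\eqref{eq:PseudoMixtureOfGammas}$ of Lemma~\ref{Lemma:PseudoMixtureOfGammas}, namely $f_{X}(x)=\sum_{i=0}^{\infty}p_{i}\,\gamma(r+i,m,x)$, by integrating term by term. The substance of the argument is then just a pair of elementary gamma identities, while the only genuinely technical point is the justification of the interchange of the infinite sum with the integral.

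For part~(i), I would write $\overline{F}_{X}(x)=\int_{x}^{\infty}f_{X}(y)\dd y$, substitute $\eqref{eq:PseudoMixtureOfGammas}$, and use $\int_{x}^{\infty}\gamma(r+i,m,y)\dd y=\Gamma_{u}(r+i,m,x)$, which is merely the definition of the gamma survival function. Granting the termwise integration, this already gives $\eqref{eq:TailFunctionX}$.

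For part~(ii) I would either invoke the elementary identity $\E[(X-a)_{+}]=\int_{a}^{\infty}\overline{F}_{X}(y)\dd y$, valid for any non-negative random variable, and integrate $\eqref{eq:TailFunctionX}$, or, more directly, start from $\E[(X-a)_{+}]=\int_{a}^{\infty}(x-a)\,f_{X}(x)\dd x$ and substitute $\eqref{eq:PseudoMixtureOfGammas}$. Either route reduces the problem to the stop-loss premium of a single $\mathsf{Gamma}(r+i,m)$ law, which I would evaluate using the identity $x\,\gamma(\rho,m,x)=m\rho\,\gamma(\rho+1,m,x)$ already exploited in the proof of Lemma~\ref{Lemma:PseudoMixtureOfGammas}: taking $\rho=r+i$,
\[
\int_{a}^{\infty}(x-a)\,\gamma(\rho,m,x)\dd x
= m\rho\int_{a}^{\infty}\gamma(\rho+1,m,x)\dd x-a\int_{a}^{\infty}\gamma(\rho,m,x)\dd x
= m\rho\,\Gamma_{u}(\rho+1,m,a)-a\,\Gamma_{u}(\rho,m,a),
\]
and summing against the $p_{i}$ produces exactly $\eqref{eq:UsualStoplossX}$.

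The hard part is the termwise integration, because the coefficients $p_{i}$ in $\eqref{eq:PiExpression}$ need not be absolutely summable, so a crude dominated-convergence argument is not available. Instead I would lean on the $\Lp^{2}(\nu)$ framework of Section~\ref{ssec:PolynomialApproxPositive}: with $g=f_{X}/f_{\nu}$ and $g^{(K)}=\sum_{k=0}^{K}q_{k}Q_{k}$ one has $g^{(K)}\to g$ in $\Lp^{2}(\nu)$, while the test functions $\Ind_{(x,\infty)}$ and $y\mapsto(y-a)_{+}$ both lie in $\Lp^{2}(\nu)$ since the $\mathsf{Gamma}(r,m)$ reference has finite moments of every order. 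Continuity of the inner product then gives $\overline{F}_{X}(x)=\lim_{K\to\infty}\left<g^{(K)},\Ind_{(x,\infty)}\right>_{\nu}$, and similarly for the stop-loss functional. Since each $g^{(K)}f_{\nu}$ is a genuine polynomial times $f_{\nu}$, the finite rearrangement into gamma densities carried out in the proof of Lemma~\ref{Lemma:PseudoMixtureOfGammas} applies verbatim at level $K$; checking that the truncated coefficients converge to the $p_{i}$ and passing to the limit then yields $\eqref{eq:TailFunctionX}$ and $\eqref{eq:UsualStoplossX}$.
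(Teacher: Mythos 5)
Your proof is correct and follows essentially the same route as the paper: substitute the pseudo-mixture representation \eqref{eq:PseudoMixtureOfGammas} and integrate term by term, using $\int_a^\infty x\,\gamma(r+i,m,x)\dd x = m(r+i)\Gamma_u(r+i+1,m,a)$ for the stop-loss part. The paper performs the termwise integration without further comment, so your $\Lp^2(\nu)$ discussion of the interchange is extra care beyond what is written there (though note its final limit-passing step, from the truncated coefficients $\widehat p_i^{(K)}$ to the series with the $p_i$, is itself only asserted rather than proved).
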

\begin{proof}
If $f_{X}/f_\nu\in \Lp^2(\nu)$ then Lemma~\ref{Lemma:PseudoMixtureOfGammas} allows us to write $f_X$ as in \eqref{eq:PseudoMixtureOfGammas}, and integrating this over $\left[x,\infty\right)$ yields the formula \eqref{eq:TailFunctionX}.
Now consider the usual \slp of $X$, and note that
\begin{align}
\E\left[(X-a)_+\right]=&\int_{a}^{\infty}(x-a)f_{X}(x)\dd x \nonumber\\
=&\int_{a}^{\infty}xf_{X}(x)\dd x-a\overline{F}_{X}(a).\label{eq:UsualStopLossXProof1}
\end{align}
Then notice that for every $i \in \N$, we have that
\begin{align}
\int_{a}^{\infty} x \, \gamma(r+i, m, x)\dd x
&= \int_{a}^{\infty}x\frac{x^{r+i-1} \e^{-x/m}}{\Gamma(r+i)m^{r+i}}\dd x\nonumber\\
&= m\frac{\Gamma(r+i+1)}{\Gamma(r+i)}\int_{a}^{\infty}\frac{x^{r+i} \e^{-x/m}}{\Gamma(r+i+1)m^{r+i+1}}\dd x\nonumber\\
&= m (r+i) \Gamma_{u}(r+i+1,m,a).\label{eq:UsualStopLossXProof2}
\end{align}
Therefore substituting \eqref{eq:PseudoMixtureOfGammas} and \eqref{eq:TailFunctionX} into \eqref{eq:UsualStopLossXProof1} and simplifying with \eqref{eq:UsualStopLossXProof2} yields \eqref{eq:UsualStoplossX}.
\end{proof}

Proposition \ref{prop:OrthogonalPolynomialForm} represents a practical refinement in comparison to the works Goffard \etal \cite{GoLoPo15,GoLoPo16} as the formulas derived may be readily evaluated without using numerical integration.

\subsection{Approximating densities of positive compound distributions}
We now focus on variables $S_N$ which admit a compound distribution. Since these distributions have an atom at 0, we put aside this singularity and focus on the defective \pdf $f_{S_N}^+$. The discussion in Sections \ref{ssec:PolynomialApproxIntro} and \ref{ssec:PolynomialApproxPositive} also apply to defective densities. Namely, if $f_{S_N}^{+}/f_\nu\in \Lp^2(\nu)$ then the expansion in Lemma \ref{Lemma:PseudoMixtureOfGammas} is valid, and we have
\begin{equation*}\label{eq:PseudoErlangMixtureRepresentationDefectivePDF}
f_{S_N}^+(x)= \sum_{k=0}^\infty q_k Q_k(x) \, \gamma(r, m, x) = \sum_{i=0}^{\infty} p_i \gamma(r+i,m,x),\text{ for }x>0,
\end{equation*}
where $q_k=\int_{0}^{\infty}Q_{k}(x)f_{S_N}^+(x)\dd x$ and $p_i$ is given by \eqref{eq:PiExpression}.
Truncating the first summation yields
\begin{equation*}\label{eq:ApproxPolynomialRepresentation}
f_{S_N}^{+}(x)
\approx \sum_{k=0}^{K} q_k Q_{k}(x) \, \gamma(r,m,x)
= \sum_{i=0}^{K} \widehat{p}_i\gamma(r+i,m,x),
\end{equation*}
where $\widehat{p}_i =\sum_{k=i}^K q_k (-1)^{i+k} / [i! \, (k-i)! ] \sqrt{k! \Gamma(k+r) / \Gamma(r)}$ for $i \le K$. Evaluations of the survival function $\overline{F}_{S_N}$ and the \slp $\E\left[\left(S_N-a\right)_{+}\right]$ follow from Proposition \ref{prop:OrthogonalPolynomialForm}. If the integrability condition is not satisfied then the exponentially tilted version of the defective \pdf is expanded.

\subsubsection{Choice of $r$ and $m$}\label{ssec:ChoosingmAndr}

The parameters for the polynomial approximations are set differently for the light-tailed and heavy-tailed cases. In the light-tailed cases moment matching of order $2$ is the natural procedure to set the values of $r$ and $m$. We need to take into account the result in Proposition \ref{prop:ExpBound} and make sure that $m>1/(2\rho_X)$. Hence, the value of $\rho_X$ depends on the distributions of $N$ and $U$. The two distributions we use for modeling the claim frequency $N$ are the \textit{Poisson} and the \textit{Pascal} distributions. The Poisson distribution is denoted by $\mathsf{Poisson}(\lambda)$ with \pmf
\[ f_N(k) = \frac{e^{-\lambda}\lambda^{k}}{k!} \,, \quad \text{ for }k = 0,1\dots \,, \]
where $\lambda>0 $. We define the Pascal \rv to be the number of failures counted before observing $\alpha \in \Npos$ successes, denoted $\mathsf{Pascal}(\alpha,p)$ with \pmf
\[ f_N(k) = \binom{\alpha+k-1}{k} p^\alpha q^k \,, \quad \text{ for } k = 0,1,\dots \,. \]
Our method is applicable for any claim frequency distribution as long it admits a probability generating function. This allows us to compute the expansion coefficients, detailed later in Section~\ref{sssec:ComputingOrthogonalCoefficients}.
\begin{ex}\label{ex:ParametersCompoundPoisson}
Let $N$ be Poisson distributed, the moment generating function of $S_N$ is then given by
$$
\mathcal{L}_{S_N}(-s)=\exp\left[\lambda(\mathcal{L}_U(-s)-1)\right].
$$
The radius of convergence of $S_N$ coincides with the one of $U$, $\rho_{S_N}=\rho_U$. In  that case, we can set $r=1$ and m=$\lambda\mathbb{E}(U)$ which corresponds to a moment matching procedure of order 1 or set $r=\lambda\mathbb{E}(U)^{2}/\mathbb{E}\left(U^{2}\right)$ and $m=\mathbb{E}\left(U^{2}\right)/\mathbb{E}(U)$ which, in turns, matches the two first moments.
\end{ex}
\begin{ex}\label{ex:ParametersCompoundPascal}
Let $N$ be Pascal distributed, the moment generating function of $S_N$ is then given by
$$
\mathcal{L}_{S_N}(-s)=\left[\frac{p}{1-q\mathcal{L}_U(-s)}\right]^{\alpha}.
$$
The radius of convergence $\rho_{S_N}$ is the positive solution of the equation $\mathcal{L}_U(-s)=q^{-1}$. We set $r=1$ and $m=\rho_{S_N}^{-1}$.
\end{ex}
The parametrization proposed in Example \ref{ex:ParametersCompoundPascal} is linked to the fact that it leads to the exact defective \pdf in the case of a compound Pascal model with exponentially distributed claim sizes. First, we need to introduce the binomial distribution denoted by $\mathsf{Binomial}(n,p)$ with \pmf
 \[ f_N(k) = \binom{n}{k} p^k q^{n-k} \,, \quad \text{ for }k = 0,1,\dots, n\,, \]
 where $p \in (0,1)$, $n \in \Npos$, and $p+q=1$. The following lemma, adapted from \cite{PaWi81}, shows a useful correspondence between the Pascal and binomial distributions when used in compound sums with the exponential distribution.
\begin{lemma} \label{lemma:BinomialPascalEquivalence}
Consider the random sums $X = \sum_{i=1}^{N_1} U_i$ and $Y = \sum_{i=1}^{N_2} V_i$, where
\[
N_1 \sim \mathsf{Pascal}(\alpha,p) \,, \quad  U_i \iidDist \mathsf{Gamma}(1,\beta) \,, \quad  N_2 \sim \mathsf{Binomial}(\alpha,q)  \,, \quad V_i \iidDist \mathsf{Gamma}(1,p^{-1} \beta) \,,
\]
where $p \in (0,1)$, $\alpha \in \Npos$, $p + q = 1$, and where $\beta > 0$.
Then we have $X \eqdistr Y$.
\end{lemma}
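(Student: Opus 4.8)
The plan is to show that $X$ and $Y$ have the same Laplace transform and then invoke the uniqueness of the Laplace transform on $\R_+$, which pins down the distribution including the atom both variables carry at the origin. Since both $X$ and $Y$ are compound sums, I would use the identity $\LTsub{S_N}(t) = \PGF_N[\LTsub{U}(t)]$ recalled in Section~\ref{sec:Preliminaries}, together with the relevant probability generating functions and exponential Laplace transforms written in the paper's parametrization.

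First I would record the ingredients. A $\mathsf{Gamma}(1,\beta)$ variable has scale $\beta$, hence $\LTsub{U}(t) = (1+\beta t)^{-1}$, while a $\mathsf{Gamma}(1,p^{-1}\beta)$ variable has scale $\beta/p$, hence $\LTsub{V}(t) = (1 + p^{-1}\beta t)^{-1} = p/(p+\beta t)$. The probability generating function of $N_1 \sim \mathsf{Pascal}(\alpha,p)$ is $\PGF_{N_1}(z) = [p/(1-qz)]^\alpha$, and that of $N_2 \sim \mathsf{Binomial}(\alpha,q)$ is $\PGF_{N_2}(z) = (p+qz)^\alpha$, where here $q$ is the success probability (so that $\P(N_2=0) = p^\alpha = \P(N_1=0)$).

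Then I would substitute and simplify, using $p+q=1$ throughout. For $X$,
\[
\LTsub{X}(t) = \PGF_{N_1}[\LTsub{U}(t)] = \left(\frac{p}{1 - q(1+\beta t)^{-1}}\right)^{\alpha} = \left(\frac{p(1+\beta t)}{p+\beta t}\right)^{\alpha},
\]
and for $Y$,
\[
\LTsub{Y}(t) = \PGF_{N_2}[\LTsub{V}(t)] = \left(p + q\,\frac{p}{p+\beta t}\right)^{\alpha} = \left(\frac{p(p+\beta t+q)}{p+\beta t}\right)^{\alpha} = \left(\frac{p(1+\beta t)}{p+\beta t}\right)^{\alpha}.
\]
The two expressions coincide for every $t\ge 0$, which gives $X \eqdistr Y$.

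There is no genuinely hard step: once the generating functions are written down the conclusion is a two-line algebraic simplification. The only place where care is needed — and the only place an inversion or mislabelling error could creep in — is keeping the paper's conventions straight, namely that $\mathsf{Gamma}(1,\cdot)$ carries a scale parameter and that $\mathsf{Binomial}(\alpha,q)$ has $q$ as its success probability. I would also note explicitly that the common atom at $0$ is harmless, since the Laplace transform still characterises the law on $[0,\infty)$, so equality of transforms indeed yields equality in distribution.
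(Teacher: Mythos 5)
Your proof is correct and follows exactly the paper's approach: the paper's own proof simply asserts that $X$ and $Y$ have the same Laplace transform, and you have spelled out the computation (with the correct parametrization conventions) that justifies this assertion. Nothing further is needed.
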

\begin{proof}
Both $X$ and $Y$ have the same Laplace transform, so $X \eqdistr Y$.
\end{proof}

\begin{coro} \label{Coro:PascalExponentialStopLoss}
Consider the compound sum $S_N = \sum_{i=1}^N U_i$ where $N \sim \mathsf{Pascal}(\alpha,p)$ and the $U_i \iidDist\mathsf{Gamma}(1,\beta)$.
Then the \svf of $S_N$ is given by
\begin{align*}
\overline{F}_{S_N}(x) &= \sum_{i=1}^\alpha \binom{\alpha}{i} q^i p^{\alpha-i} \, \Gamma_u\left(i, p^{-1}\beta, x\right),
\end{align*}
and its \slp is given by
\begin{equation*}\label{eq:StopLossCompoundNegBinExp}
\E\left[\left(S_N-a\right)_{+}\right] = \sum_{i=1}^\alpha \binom{\alpha}{i} q^i p^{\alpha-i} \left[ \frac{i\beta}{p } \Gamma_u\big(i+1, p^{-1} \beta , a \big) - a \Gamma_u\left(i, p^{-1}\beta , a\right) \right].
\end{equation*}
% \end{color}
\end{coro}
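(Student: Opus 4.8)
The plan is to reduce the compound Pascal sum to a compound binomial one via Lemma~\ref{lemma:BinomialPascalEquivalence}, and then to read off both formulas by a direct conditioning argument (the same statements also drop out of Proposition~\ref{prop:OrthogonalPolynomialForm} after a harmless index shift, which is really the point of the corollary). First I would apply Lemma~\ref{lemma:BinomialPascalEquivalence}, so that $S_N \eqdistr Y := \sum_{i=1}^{N_2} V_i$ with $N_2 \sim \mathsf{Binomial}(\alpha,q)$ and $V_i \iidDist \mathsf{Gamma}(1,p^{-1}\beta)$; since the convolution of $i$ independent $\mathsf{Gamma}(1,p^{-1}\beta)$ laws is $\mathsf{Gamma}(i,p^{-1}\beta)$, the variable $Y$ conditionally on $N_2=i$ has survival function $\Gamma_u(i,p^{-1}\beta,\cdot)$.

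Next, by the law of total probability,
\[
\overline{F}_{S_N}(x) = \overline{F}_Y(x) = \sum_{i=0}^{\alpha} \binom{\alpha}{i} q^i p^{\alpha-i}\, \Gamma_u\big(i, p^{-1}\beta, x\big), \qquad x \ge 0,
\]
where the $i=0$ summand corresponds to the atom $\P(N_2=0)=p^\alpha$ at the origin and therefore contributes $0$ to the survival function for $x \ge 0$; discarding it gives the first claimed identity. For the stop-loss premium I would condition in the same way,
\[
\E\big[(S_N-a)_+\big] = \E\big[(Y-a)_+\big] = \sum_{i=1}^{\alpha} \binom{\alpha}{i} q^i p^{\alpha-i}\, \E\big[(G_i-a)_+\big], \qquad G_i \sim \mathsf{Gamma}(i,p^{-1}\beta),
\]
the $i=0$ term again vanishing for $a \ge 0$. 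It then remains to insert the elementary Gamma stop-loss identity $\E[(G_i-a)_+] = m\,i\,\Gamma_u(i+1,m,a) - a\,\Gamma_u(i,m,a)$ with $m = p^{-1}\beta$ — this is exactly the computation carried out in \eqref{eq:UsualStopLossXProof1}–\eqref{eq:UsualStopLossXProof2} inside the proof of Proposition~\ref{prop:OrthogonalPolynomialForm}, applied to a single Gamma variable — so that $m\,i = i\beta/p$ and the second identity follows.

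Alternatively, and this is the remark worth making, the displayed survival function is precisely of the form in Proposition~\ref{prop:OrthogonalPolynomialForm}(i) with $r=1$, $m=p^{-1}\beta$, and coefficients $p_i = \binom{\alpha}{i+1} q^{i+1} p^{\alpha-i-1}$ for $0 \le i \le \alpha-1$ and $p_i = 0$ otherwise (note these $p_i$ sum to $1-p^\alpha$, reflecting the defect at $0$), so that the orthogonal-polynomial approximation is exact in this model and part~(ii) of that proposition delivers the stop-loss formula directly once the index is shifted back by one. The only point requiring a little care — hardly an obstacle — is this bookkeeping around the atom at the origin and the shift between the $\mathsf{Binomial}$ count $i$ and the shape parameter $r+i$; everything else is routine.
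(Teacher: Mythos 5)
Your proposal is correct and follows essentially the same route as the paper's own proof: reduce to the compound binomial with $\mathsf{Gamma}(1,p^{-1}\beta)$ summands via Lemma~\ref{lemma:BinomialPascalEquivalence}, condition on the binomial count so that the partial sums are $\mathsf{Gamma}(i,p^{-1}\beta)$, and read off both formulas (the paper leaves the Gamma stop-loss computation implicit, whereas you spell it out using the calculation from the proof of Proposition~\ref{prop:OrthogonalPolynomialForm}). Your bookkeeping around the atom at zero and the closing remark identifying the coefficients $p_i$ in the Proposition~\ref{prop:OrthogonalPolynomialForm} representation are both accurate.
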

\begin{proof}
By Lemma~\ref{lemma:BinomialPascalEquivalence} we can instead consider the $S_N$ defined by $N \sim \mathsf{Binomial}(\alpha,q)$ and with $U_i \iidDist \mathsf{Gamma}(1,p^{-1} \beta)$.
Noting that $S_n = U_1 + \dots + U_n \sim \mathsf{Gamma}(n, p^{-1} \beta)$ gives the result.
\end{proof}
One conclusion of Corollary~\ref{Coro:PascalExponentialStopLoss} is that the exact solution coincides with our approximation when $r=1$ and $m=p^{-1}\beta$ (and with $K \ge \alpha - 1$). Note that $p\beta^{-1}$ is the solution of the equation $\LTsub{U}(-s)=q^{-1}$ which is consistent with the parametrization proposed in Example \ref{ex:ParametersCompoundPascal}.\\

In the heavy-tailed cases we set $\theta=1$, $m~=~\theta/2~=~1/2$ (at the lower limit for $m$; this gives $\widetilde{m}=1$), and $r = \E[U]$.

\subsubsection{Computation of the $q_k$'s} \label{sssec:ComputingOrthogonalCoefficients}

The inherent challenge of the implementation of the polynomial method remains the evaluation of the coefficients $\{q_k, k\geq0\}$. Recall that
\begin{equation*}
q_{k}=\int_{0}^{\infty}Q_k(x)f_{S_N}^+(x)\dd x,\text{ }k\geq0.
\end{equation*}
We propose an evaluation based on the Laplace transform $\LT{f_{S_N}^+}$. Define the generating function of the sequence $\{q_k c_k, k\geq0\}$ as $\mathcal{Q}(z)=\sum_{k=0}^{\infty}q_{k}c_{k}z^{k}$, where
\begin{equation*}\label{eq:ck}
c_k= \left(\frac{\Gamma(k+r)}{\Gamma(k+1)\Gamma(r)}\right)^{1/2}, \quad \text{ for }k\geq0 \,.
\end{equation*}
The following result establishes a link between the Laplace transform of $f_{S_N}^+$  and the generating function $\mathcal{Q}(z)$.
\begin{prop}\label{prop:LaplaceTransformPolynomialRepresentation}
Assume that $f_{S_N}^+/f_{\nu}\in\Lp^{2}(\nu)$, then we have
\begin{equation}\label{eq:CoefficientGeneratingFunctionToLaplaceTransforn}
\mathcal{Q}(z)=(1+z)^{-r}\LT{f_{S_N}^+}\Big[\frac{-z}{m(1+z)}\Big].
\end{equation}
\end{prop}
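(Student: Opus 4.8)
The plan is to recover $\mathcal{Q}(z)$ from the classical generating function of the Laguerre polynomials, with the $\Lp^2(\nu)$ hypothesis used only to license an interchange of summation and integration. First I would strip the normalisation off the coefficients. Since $Q_k(x)=(-1)^k c_k^{-1}L_k^{r-1}(x/m)$ by the formulas recalled above (note $c_k^2=\binom{k+r-1}{k}$), we have $q_k c_k=\E[c_kQ_k(S_N)]=(-1)^k\int_0^\infty L_k^{r-1}(x/m)f_{S_N}^+(x)\dd x$, so that formally
\[
\mathcal{Q}(z)=\sum_{k=0}^\infty (-z)^k\int_0^\infty L_k^{r-1}(x/m)f_{S_N}^+(x)\dd x=\int_0^\infty\Big(\sum_{k=0}^\infty L_k^{r-1}(x/m)(-z)^k\Big)f_{S_N}^+(x)\dd x .
\]
Substituting $\alpha=r-1$, $t=-z$, $y=x/m$ into the identity $\sum_{n\ge0}L_n^\alpha(y)t^n=(1-t)^{-\alpha-1}\exp\{-yt/(1-t)\}$ collapses the inner series to $(1+z)^{-r}\exp\{xz/(m(1+z))\}$; pulling the $z$-dependent factor out of the integral leaves $(1+z)^{-r}\LT{f_{S_N}^+}\bigl[{-z}/(m(1+z))\bigr]$, which is exactly \eqref{eq:CoefficientGeneratingFunctionToLaplaceTransforn}.

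The only real work is justifying the exchange of $\sum_k$ and $\int$, for which I would invoke $g\defeq f_{S_N}^+/f_\nu\in\Lp^2(\nu)$. Writing $\langle\cdot,\cdot\rangle_\nu$ for the $\Lp^2(\nu)$ inner product and using $L_k^{r-1}(\cdot/m)=(-1)^k c_k Q_k$, the $K$-th partial sum of $\mathcal{Q}(z)$ equals $\sum_{k=0}^K z^k c_k q_k=\langle S_K,g\rangle_\nu$, where $S_K(x)\defeq\sum_{k=0}^K z^k c_k Q_k(x)=\sum_{k=0}^K(-z)^kL_k^{r-1}(x/m)$. Because $c_k^2=\binom{k+r-1}{k}$ grows only polynomially in $k$, the scalar sequence $(z^k c_k)_{k\ge0}$ is square-summable for $|z|<1$, so by orthonormality of $\{Q_k\}$ the partial sums $S_K$ converge in $\Lp^2(\nu)$ to some $h$, and continuity of the inner product yields $\mathcal{Q}(z)=\langle h,g\rangle_\nu$.

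It then remains to identify $h$ with $S_\infty(x)\defeq(1+z)^{-r}\exp\{xz/(m(1+z))\}$. On one hand the Laguerre generating function gives $S_K(x)\to S_\infty(x)$ pointwise for $|z|<1$; on the other hand $\Lp^2(\nu)$ convergence forces $S_K\to h$ $\nu$-almost everywhere along a subsequence, so $h=S_\infty$ $\nu$-a.e. One checks $S_\infty\in\Lp^2(\nu)$ directly, since $S_\infty^2$ is a constant times $\exp\{2xz/(m(1+z))\}$ and this is $\mathsf{Gamma}(r,m)$-integrable precisely when $2z/(1+z)<1$, i.e. when $z<1$, consistent with the range found above. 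Therefore $\mathcal{Q}(z)=\langle S_\infty,g\rangle_\nu=\int_0^\infty S_\infty(x)f_{S_N}^+(x)\dd x=(1+z)^{-r}\LT{f_{S_N}^+}\bigl[{-z}/(m(1+z))\bigr]$, completing the argument.

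Thus the algebraic heart is a single application of the Laguerre generating function, and the main obstacle is purely the convergence bookkeeping; the $\Lp^2(\nu)$ assumption makes this painless by turning the interchange into continuity of the inner product together with a pointwise-versus-$\Lp^2$ limit identification. If one wished to avoid Hilbert-space language, the same interchange could instead be secured by dominated convergence after bounding the Laguerre partial sums uniformly against an integrable envelope, but the $\Lp^2$ route is shorter and is exactly the setting the proposition already assumes.
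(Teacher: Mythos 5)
Your argument is correct, and it runs in a genuinely different direction from the paper's. The paper starts from the mixture-of-gammas form of Lemma~\ref{Lemma:PseudoMixtureOfGammas}, takes the Laplace transform of \eqref{eq:PseudoMixtureOfGammas} term by term (each $\gamma(r+i,m,\cdot)$ contributing $(1+sm)^{-(r+i)}$), resums the inner sum with the binomial theorem to get $\LT{f_{S_N}^+}(s)=\big(1-\tfrac{sm}{1+sm}\big)^{r}\mathcal{Q}\big({-}\tfrac{sm}{1+sm}\big)$, and then changes variables $z=-sm/(1+sm)$; you instead work directly on the $\mathcal{Q}(z)$ side, collapsing $\sum_k(-z)^kL_k^{r-1}(x/m)$ under the integral via the classical Laguerre generating function $\sum_n L_n^{\alpha}(y)t^n=(1-t)^{-\alpha-1}\e^{-yt/(1-t)}$. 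Algebraically the two routes are dual (the generating function encodes exactly the resummation the paper performs after transforming), but yours buys two things: it needs only the standard Szeg\H{o} identity rather than the detour through Lemma~\ref{Lemma:PseudoMixtureOfGammas}, and your justification of the sum--integral interchange --- square-summability of $(z^kc_k)$ for $|z|<1$, Riesz--Fischer convergence of $S_K$ in $\Lp^2(\nu)$, continuity of the inner product, and a.e.\ identification of the limit with $(1+z)^{-r}\e^{xz/(m(1+z))}$ --- is more careful than the paper's formal term-by-term transform, which implicitly requires the same kind of control. What the paper's route buys in exchange is economy: it reuses machinery already established and never needs to discuss the domain of $z$, whereas your derivation is naturally restricted to $|z|<1$; this is harmless, since the substitution $z=-sm/(1+sm)\in(-1,0)$ for $s>0$ keeps the identity in exactly that range, but it is worth stating explicitly. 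The only cosmetic slip is writing $q_kc_k=\E[c_kQ_k(S_N)]$ for a defective density; your subsequent integral expression is the right one.
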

\begin{proof}
As $f_{S_N}^+/f_{\nu}\in\Lp^{2}(\nu)$, the polynomial representation of $f_{S_N}^+$ follows from the application of Lemma~\ref{Lemma:PseudoMixtureOfGammas} with
\begin{equation}\label{eq:prop:PolynomialRepresentationgX}
f_{S_N}^+(x)=\sum_{k=0}^{\infty}\sum_{i=0}^{k}q_k\frac{(-1)^{i+k}}{i! \, (k-i)!} \sqrt{\frac{k! \Gamma(k+r)}{\Gamma(r)}}\gamma(r+i,m,x).
\end{equation}
Taking the Laplace transform in \eqref{eq:prop:PolynomialRepresentationgX} yields
\begin{align*}
\LT{f_{S_N}^+}(s)
&= \left(\frac{1}{1+sm}\right)^{r}\sum_{k=0}^{\infty}q_k \sum_{i=0}^{k} (-1)^{k+i} \left(\frac{\Gamma(k+r)}{\Gamma(k+1)\Gamma(r)}\right)^{1/2} \binom{k}{i} \left(\frac{1}{1+sm}\right)^{i}\nonumber \\
&= \left(\frac{1}{1+sm}\right)^{r}\sum_{k=0}^{\infty}q_k c_k (-1)^{k} \sum_{i=0}^{k} \binom{k}{i} \left(\frac{-1}{1+sm}\right)^{i} \nonumber\\
&= \left(\frac{1}{1+sm}\right)^{r}\sum_{k=0}^{\infty}q_k c_k (-1)^{k} \left( \frac{sm}{1+sm}  \right)^{k}\nonumber \\
&= \left( 1 - \frac{sm}{1+sm}\right)^{r}\mathcal{Q}\left(-\frac{sm}{1+sm}\right).
\end{align*}
Thus \eqref{eq:CoefficientGeneratingFunctionToLaplaceTransforn} follows from letting $z= -sm / (1+sm)$.
\end{proof}
The Laplace transform of the defective \pdf $f_{S_N}^{+}$ is given by
$$
\LT{f_{S_N}^{+}}(s)=\mathcal{L}_{S_N}(s)-\mathbb{P}(N=0).
$$
The coefficients of the polynomials can be derived after differentiation of the generating function $\mathcal{Q}(z)$ as
\begin{align*}\label{eq:PolynomialExpansionCoefficientDerivativeGeneratingFunction}
q_k&=\frac{1}{c_k}\frac{1}{k!}\frac{\dd^{k}}{\dd z^{k}}\mathcal{Q}(z)\Big\rvert_{z=0}  =\frac{1}{c_k} \text{Coefficient}(k, \text{MaclaurinSeries}(\mathcal{Q}(z))).
\end{align*}

\section{Laplace transform inversion approximations}\label{sec:NumericalInversionLaplaceTransform}
We present in this section a method inspired from the work of Abate and Whitt \cite{Abate1992} to recover the survival function of a compound distribution from the knowledge of its Laplace transform. The methodology is further applied to the computation of \slps by taking advantage of the connection between the \slp of $S_N$ and the survival function of the equilibrium distribution of $S_N$. This well established method is recalled here for the sake of self-containedness and adapted to our notation. We begin by stating some useful transform relations, then discuss the general Laplace inversion framework that we will use, and will apply the method to the compound distribution problem.

\subsection{Numerical Laplace inversion}

A function $f$ can be recovered from its Laplace transform by a standard Bromwich integral.
We assume $f:\,\R_+ \to \R_+$, is a measurable function with locally bounded variation.
To define the Bromwich integral, first select a $\gamma > 0$ (we discuss this choice later), then
\[
f(x)=\frac{2\e^{\gamma x}}{\pi}\int_{0}^{\infty}\cos(xs)\Re\left[\LT{f}(\gamma + is)\right]\dd s.
\]

We apply a basic numerical integration system to this integral by first \textit{discretizing} the integral and then \textit{truncating} the resulting infinite sum. In both steps, we follow the steps of Abate and Whitt \cite{Abate1992}.

\subsubsection{Discretization} \label{Sub:Discretization}

We will use a semi-infinite trapezoidal rule, despite the apparent simplicity of the method.
With a grid size $h>0$, this discretization yields
\[
f(x) \approx f_{\text{disc}}(x) \defeq \frac{2\e^{\gamma x}}{\pi} \cdot h \, \Big \{ \frac12 \LT{f}(\gamma) + \sum_{j=1}^\infty  \cos(x \cdot hj)\Re\left[\LT{f}(\gamma + i \cdot hj)\right] \Big \},
\]
since  $\Re\left[\LT{f}(\gamma)\right] = \LT{f}(\gamma)$. We simplify this by choosing $h = \pi/(2 x)$ and $\gamma = a / (2 x)$ for an $a > 0$, achieving
\begin{equation} \label{DiscretizedInversion}
f_{\text{disc}}(x) = \frac{\e^{a/2}}{2x}\LT{f} \left(\frac{a}{2 x}\right) + \frac{\e^{a/2}}{x} \sum_{k=1}^\infty (-1)^{k} \Re\left[\LT{f}\left(\frac{a+ i \cdot 2\pi k}{2 x}\right)\right] \,.
\end{equation}

From Theorem 5.5.1 of \cite{RoScScTe08} we have that the \textit{discretization error} (also called \textit{sampling error}) is simply
\begin{equation} \label{DiscretizationError}
    f_{\text{disc}}(x) - f(x) = \sum_{k=1}^\infty \e^{-a k} f\big( (2k+1) x \big) \,.
\end{equation}
In particular, if $0 \le f(x) \le 1$, then
\begin{equation} \label{BoundedDiscretizationError}
    f_{\text{disc}}(x) - f(x) \le \frac{\e^{-a}}{1-\e^{-a}} \,.
\end{equation}

There are no absolute value signs here --- the discretization introduces a systematic overestimate of the true function value.
Also, \eqref{DiscretizationError} implies that $a$ should be as large as possible (limited eventually by finite-precision computation).
The benefit of knowing this result is slightly offset by the requirement that $h$ and $\gamma$ now be functions of $x$ rather than constants.

\subsubsection{Truncation}

Due to the infinite series, the expression in \eqref{DiscretizedInversion} cannot be directly computed, thus it has to be truncated.
The arbitrary-seeming choice of $h$ and $\gamma$ in Section~\ref{Sub:Discretization} not only allows for calculation of the discretization error, but also benefits the truncation step. This is because the sum in \eqref{DiscretizedInversion} is (nearly) of alternating sign, and thus \textit{Euler series acceleration} can be applied to decrease the truncation error. Define for $\ell=1,2,\dots$
\begin{equation*} \label{TruncatedInversion}
s_\ell(x) \defeq \frac{\e^{a/2}}{2x}\LT{f} \left(\frac{a}{2 x}\right) + \frac{\e^{a/2}}{x} \sum_{k=1}^\ell (-1)^{k} \Re\left[\LT{f}\left(\frac{a+ i \cdot 2\pi k}{2 x}\right)\right] \,.
\end{equation*}
Then, for some positive integers $M_1$ and $M_2$,
\begin{equation}\label{eq:FinalApproxLaplaceInversion}
f(x) \approx f_{\text{disc}}(x) \approx f_{\text{approx}}(x) \defeq \sum_{k=0}^{M_1} \binom{M_1}{k} 2^{-M_1} s_{M_2+k}(x) \,.
\end{equation}

\subsection{Approximations of the survival function and stop-loss premium for compound distributions}

For a random sum  $S_N$, we consider using the technique above to evaluate the \svf $\overline{F}_{S_N}$ and the \slps from their Laplace transform.
We invert $\LT{\overline{F}_{S_N}}$, but note that inverting $\LT{F_{S_N}}$ produces almost identical results.

This inversion easily gives approximations of $\overline{F}_{S_N}$, though evaluating the \slps requires extra thought.
As noted in Dufresne \etal \cite{DuGaMo09}, we have that
\begin{equation}\label{eq:LinkStopLossPremiumEquilibriumDistribution}
\mathbb{E}\left[(S_N-d)_{+}\right]=\mathbb{E}(S_N)\overline{F_{S_N^{\ast}}}(d),
\end{equation}
where $S_N^{\ast}$ is a random variable under the \textit{equilibrium distribution} with density
\begin{equation*}
f_{S_N^{\ast}}(x)=\begin{cases}
\overline{F}_{S_N}(x) / \mathbb{E}(S_N),&\text{ for }x>0,\\
0,&\text{ otherwise},
\end{cases}
\end{equation*}
and Laplace transform
\begin{equation*}\label{eq:LaplaceTransformEquilibriumDistribution}
\LTsub{S_N^{\ast}}(s)=\frac{1-\LTsub{S_N}(s)}{s\mathbb{E}(S_N)}.
\end{equation*}
The \slp is then obtained, replacing in \eqref{eq:LinkStopLossPremiumEquilibriumDistribution} the \svf of $S_N^{\ast}$ by its approximation in \eqref{eq:FinalApproxLaplaceInversion}.

\section{Numerical illustrations}\label{sec:NumericalIllustrations}
We illustrate the performance of the two proposed numerical procedures. Section \ref{subsec:NumericsSurvivalFunctionStopLoss} focuses on approximating the \svf and the \slp associated to aggregated claim sizes, while Section \ref{subsec:ApproximationFiniteTimeRuinProbability} considers the approximation of the finite-time ruin probability with no initial reserves using formula \eqref{eq:ConnectionFiniteTimeRuinProbabilityStopLossPremium}.

For each test case, we compare the orthogonal polynomial approximation, the Laplace inversion approximation, and the crude Monte Carlo approximation. When $U$ is gamma distributed, we use the fact that $S_n$ is Erlang distributed to produce an approximate distribution for $S_N$ by truncating $N$ to be less than some large level.

The parameters for the polynomial approximations has been discussed in Section \ref{ssec:ChoosingmAndr}, the calibration is depending on the assumptions over the claim frequency and claim sizes distribution.
The parameters for the Laplace inversion technique are set to $M_1=11$, $M_2=15$ and $a=18.5$ following the example of Rolski \etal \cite[Chapter 5, Section 5]{RoScScTe08}. This choice of $a$ implies that the discretization error is less than $10^{-8}$, derived from \eqref{BoundedDiscretizationError}. We do not use any built-in routines for the Laplace inversion, but simply implement \eqref{eq:FinalApproxLaplaceInversion}.

In each plot, the first subplot shows the approximations each method produces, and the second shows the \textit{approximate absolute error}. We define this, for method $i \in \{1,\dots,I\}$, as
\begin{align*}
\text{ApproximateAbsoluteError}( \widehat{f}_i, x )
&:= \widehat{f}_i(x)- \text{Median}\big\{ \widehat{f}_1(x),\dots,\widehat{f}_I(x) \big\} \\
&\approx \widehat{f}_i(x)-f(x) =: \text{AbsoluteError}( \widehat{f}_i, x ) \,.
\end{align*}

To create very accurate orthogonal polynomial approximations we let the truncation parameter $K$ be 16 which is quite large. The coefficients for this expansion are determined by symbolically calculating a Taylor series expansion of order $K$. As \textsc{Mathematica} has one of the most advanced symbolic calculus engines available we use this language. We replicated some of the tests in \textsc{Python} using the open-source \texttt{Sympy} symbolic mathematics library (specifically the faster \texttt{Symengine} version which is implemented in \textsc{C++}), though the derivatives of some special functions which appear in our test cases (e.g.\ in the Laplace transform of $S_N$ in Test 3) are not implemented yet. Both implementations are available online \cite{StoplossCode}.

\subsection{Survival function and stop-loss premium computations}

\label{subsec:NumericsSurvivalFunctionStopLoss}

To ensure both methods were implemented correctly, we applied them to the case where $N\sim\mathsf{Pascal}(\alpha=10,p=3/4)$ and $U\sim\mathsf{Gamma}(r=1,m=1/6)$. Corollary~\ref{Coro:PascalExponentialStopLoss} tells us the orthogonal approximation (with $r=1$, $m=(1/6)/(3/4)=2/9$ and $K = 10-1 = 9$) is equivalent to the true function, which we verified, and the Laplace inversion errors in Tables \ref{tbl:PascalExponentialSVFTest} and \ref{tbl:PascalExponentialSLPTest} are acceptably small.

\begin{table}[H]
\centering
\caption{Relative errors for the Laplace inversion \svf approximation} \vspace{1em}
\begin{tabular}{c|ccccc}
$x$                   & 0.5  & 1  & 1.5  & 2 & 2.5  \\
\hline
Error & 7.27e-7 & 1.92e-6 & 5.86e-6 & 1.78e-5 & 4.01e-5 \\
\end{tabular}
\label{tbl:PascalExponentialSVFTest}

\caption{Relative errors for the Laplace inversion \slp approximation} \vspace{1em}
\begin{tabular}{c|ccccc}
$a$                   & 0.5  & 1  & 1.5  & 2 & 2.5  \\
\hline
Error & 8.68e-7 & 2.27e-6 & 5.92e-6 & 1.12e-5 & -2.12e-5 \\
\end{tabular}

\label{tbl:PascalExponentialSLPTest}
\end{table}

\begin{test}
$N\sim\mathsf{Poisson}(\lambda=2)$, and $U\sim\mathsf{Gamma}(r=3/2,m=1/3)$
\end{test}

\begin{figure}[H]
\centering
\includegraphics[width=0.95\textwidth]{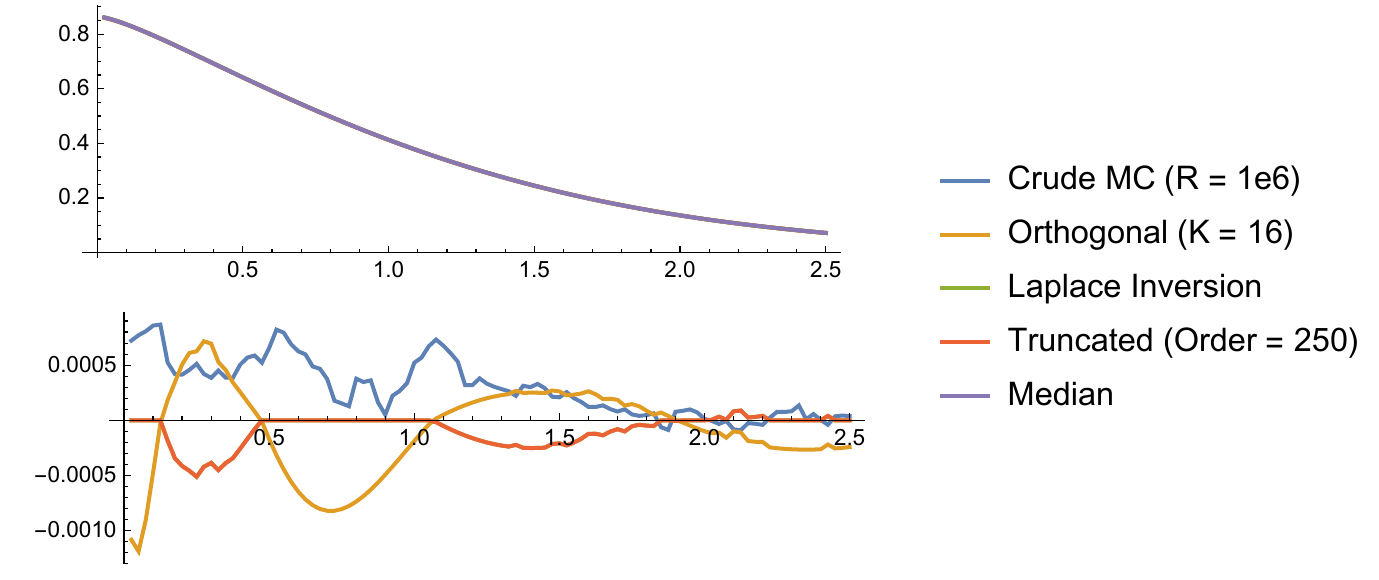}
\caption{Survival function approximation and approximate absolute error for Test 1.}
\end{figure}

\begin{figure}[H]
\centering
\includegraphics[width=0.95\textwidth]{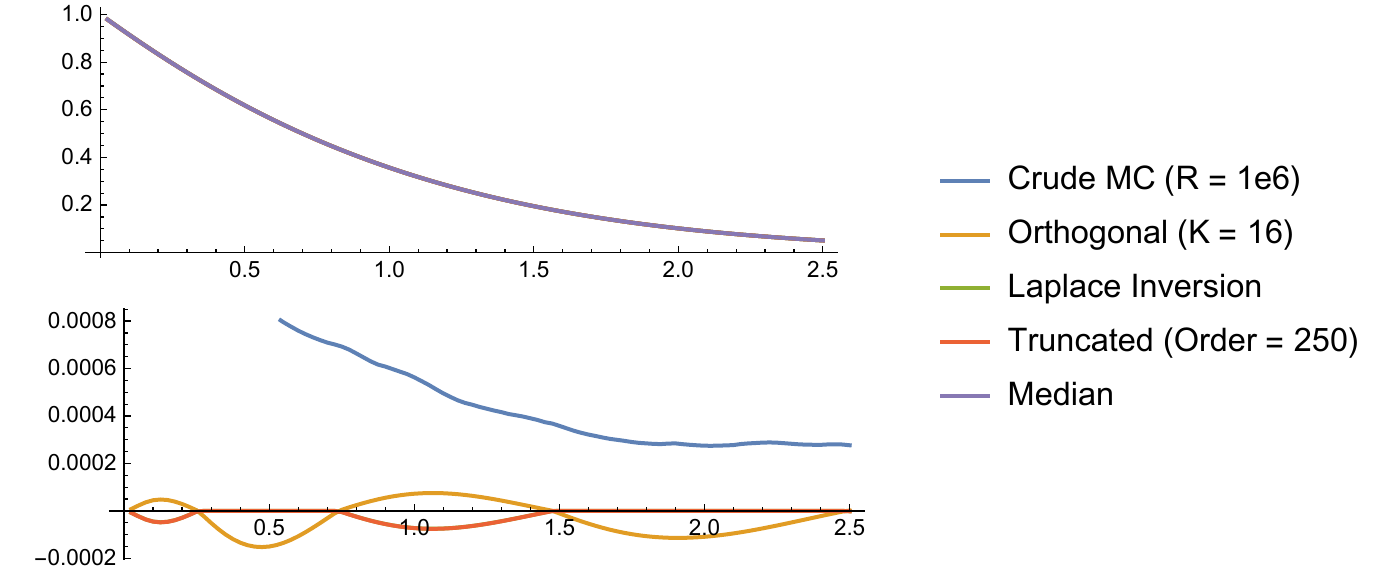}
\caption{Stop-loss premium approximation and approximate absolute error for Test 1.}
\end{figure}

\begin{test}
$N\sim\mathsf{Pascal}(\alpha=10,p=1/6)$, and $U\sim\mathsf{Gamma}(r=3/2,m=1/75)$
\end{test}

This test case (up to the scaling constant) has been considered by Jin \etal \cite[Example 3]{JiPrRe16}. In the plots for this test case, the orthogonal expansion, the Laplace inversion method, and the truncation all give the same values and hence are hidden underneath the last of these approximations to be plotted.

\begin{figure}[H]
\centering
\includegraphics[width=0.95\textwidth]{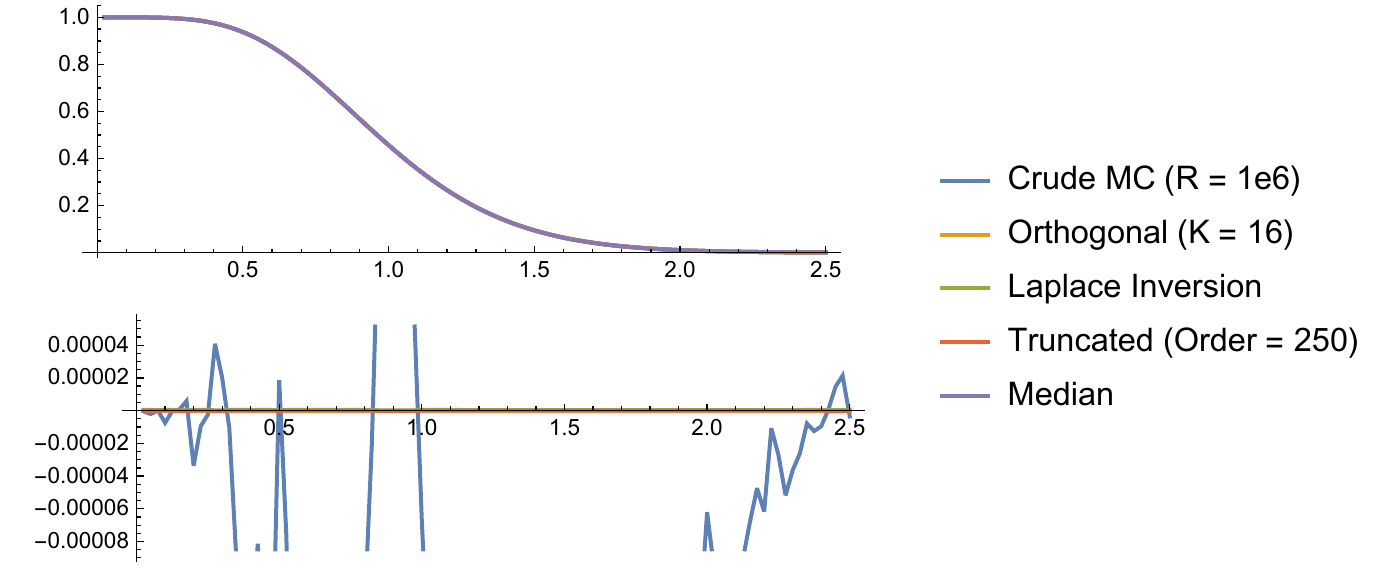}
\caption{Survival function approximation and approximate absolute error for Test 2.}
\end{figure}

\begin{figure}[H]
\centering
\includegraphics[width=0.95\textwidth]{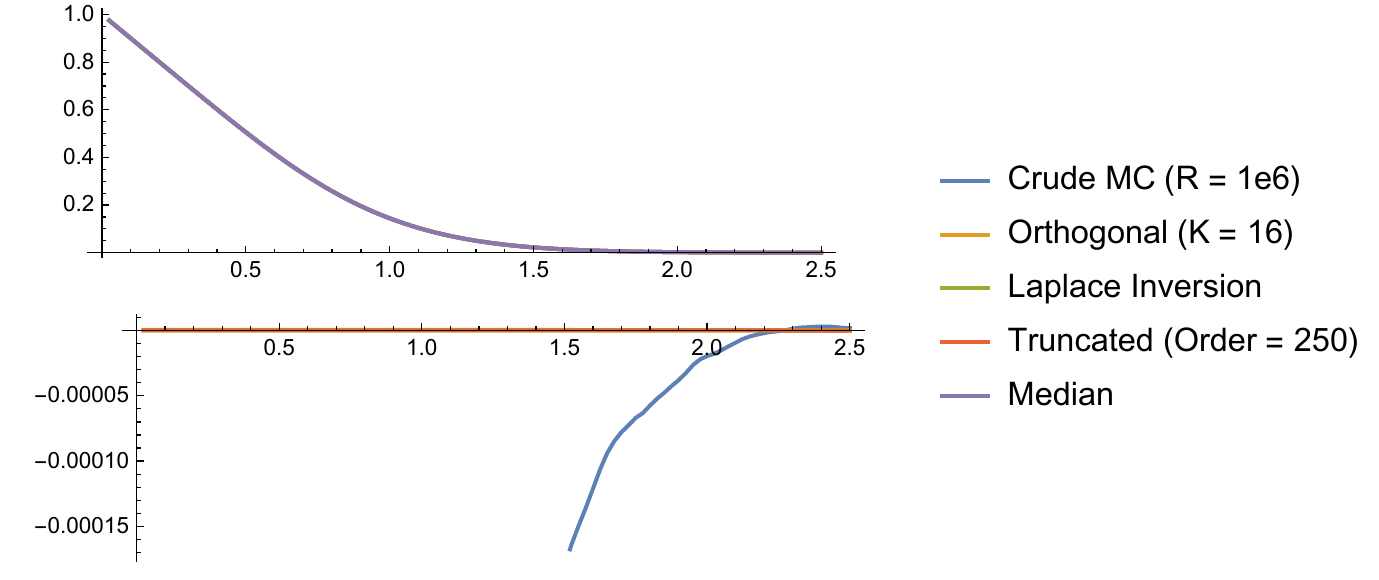}
\caption{Stop-loss premium approximation and approximate absolute error for Test 2.}
\end{figure}

\begin{test} $N\sim\mathsf{Poisson}(\lambda=4)$, and $U\sim\mathsf{Pareto}(a=5,b=11,\theta=0)$
\end{test}
The survival function for $U$, given $x \ge \theta=0$, is
\[ \overline{F}_U(x) = \left( 1 + \frac{x-\theta}{a} \right)^{-b} = \left( 1 + \frac{x}{5} \right)^{-11} \,. \]
We note that the Laplace inversion approximator breaks down for small values of $x$ or $a$ in this test case. The specific error given is an ``out of memory'' exception when \textsc{Mathematica} is attempting to do some algebra with extremely large numbers. It is unclear whether a different implementation or selection of parameters would fix this behaviour.

\begin{figure}[H]
\centering
\includegraphics[width=0.95\textwidth]{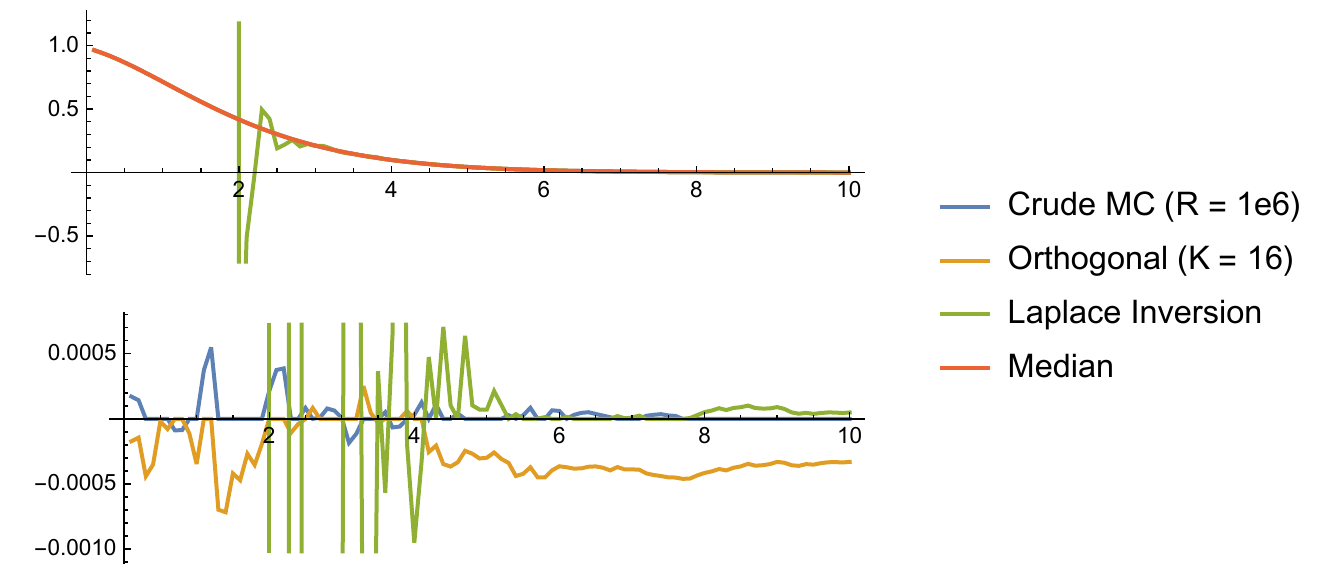}
\caption{Survival function approximation and approximate absolute error for Test 3.}
\end{figure}

\begin{figure}[H]
\centering
\includegraphics[width=0.95\textwidth]{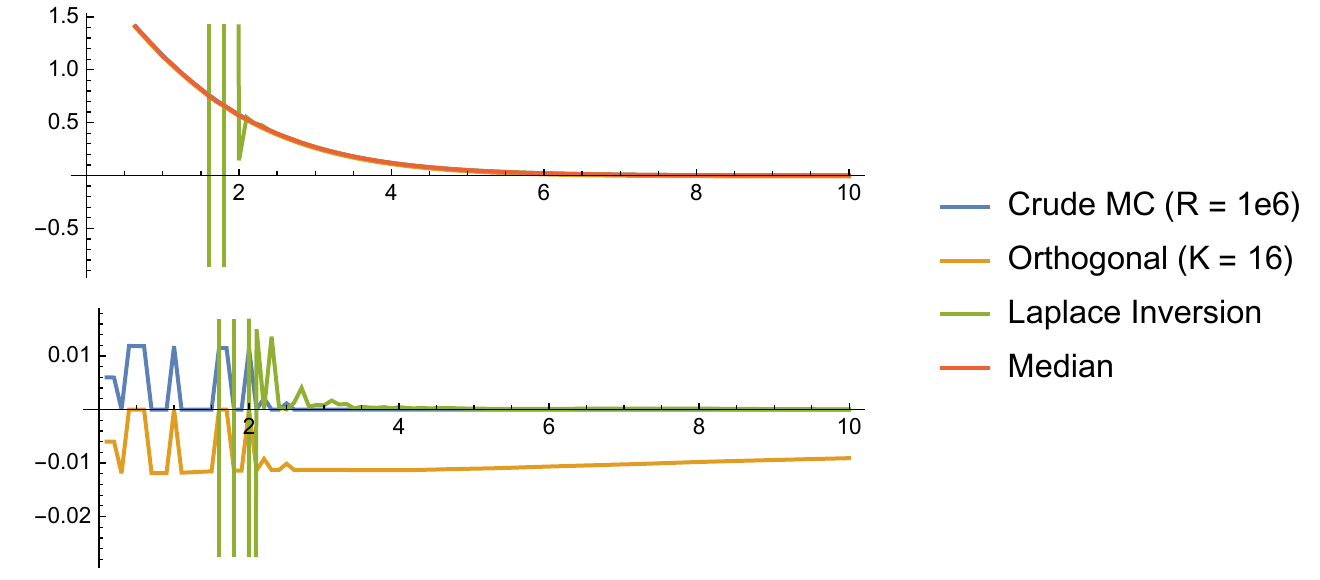}
\caption{Stop-loss premium approximation and approximate absolute error for Test 3.}
\end{figure}

\begin{test} $N\sim\mathsf{Pascal}(\alpha=2,p=1/4)$, and $U\sim\mathsf{Weibull}(\beta=1/2,\lambda=1/2)$
\end{test}
The survival function for $U$, given $x \ge 0$, is
\[ \overline{F}_U(x) = \exp\left\{ {-} \left(\frac{x}{\lambda} \right)^\beta \right\} = \exp\left\{ {-} \sqrt{2 x}  \right\} \,. \]

\begin{figure}[H]
\centering
\includegraphics[width=0.95\textwidth]{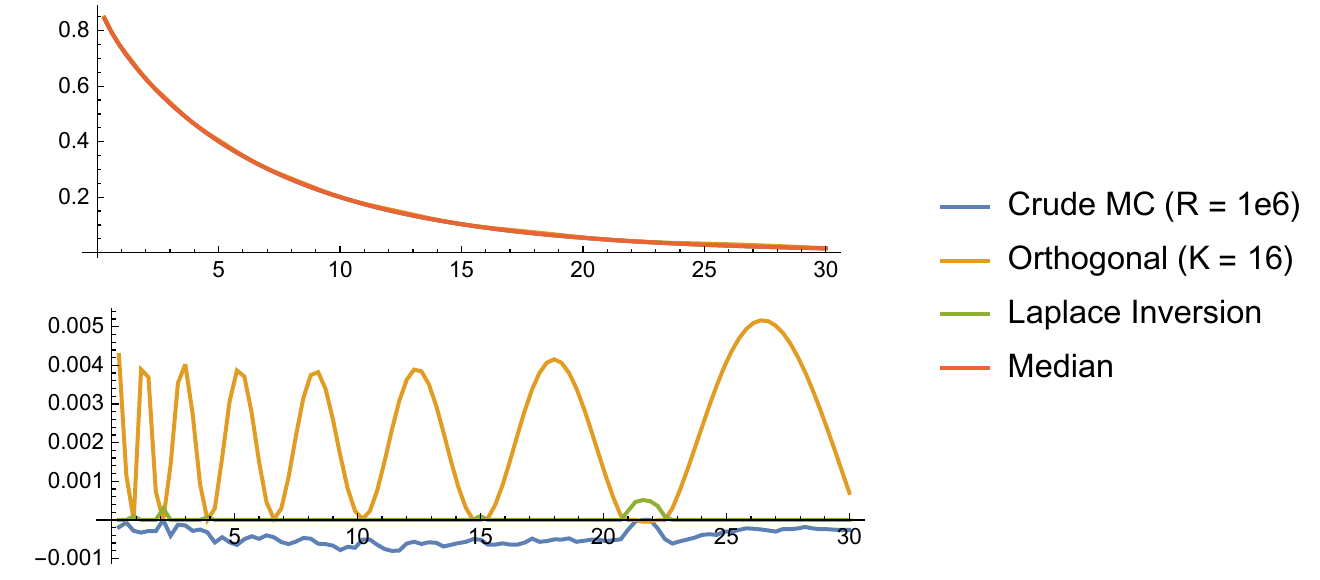}
\caption{Survival function approximations and approximate absolute error for Test 4.}
\end{figure}

\begin{figure}[H]
\centering
\includegraphics[width=0.95\textwidth]{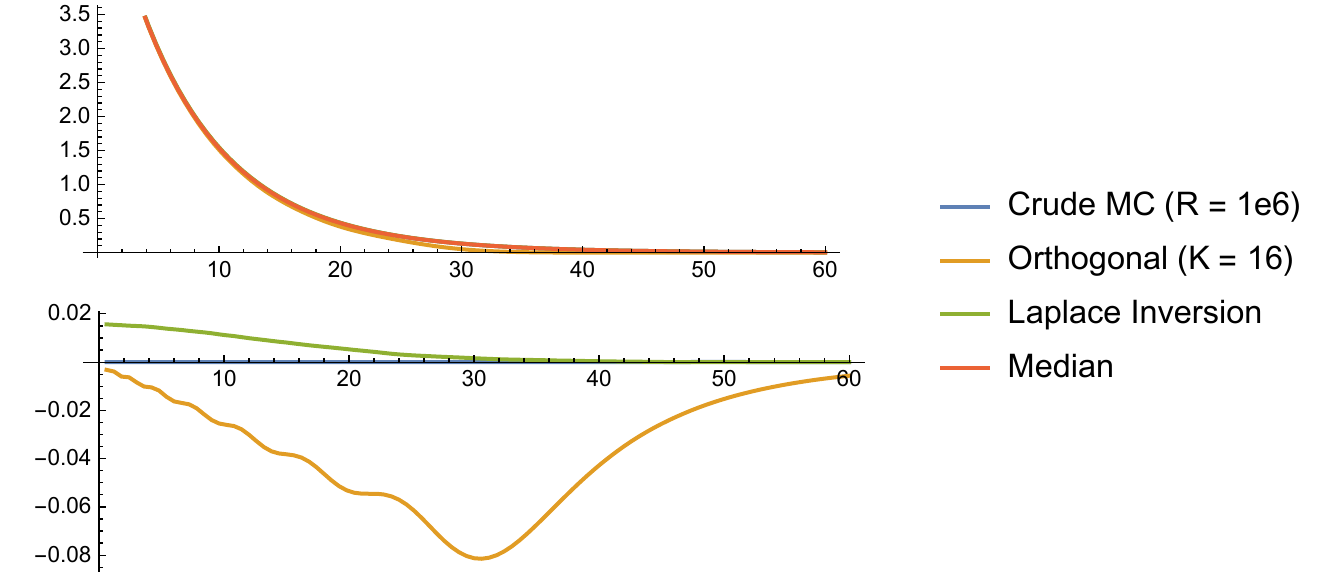}
\caption{Stop-loss premium approximations and approximate absolute error for Test 4.}
\end{figure}

\subsection{Finite-time ruin probability with no initial reserve} \label{subsec:ApproximationFiniteTimeRuinProbability}

The plots above have used common random numbers for smoothing purposes, however this is not possible in the following plots so they will appear less smooth.

\begin{test}
$\lambda=4$ and $U\sim\mathsf{Gamma}(r=2,m=2)$ and $c=1$
\end{test}

\begin{figure}[H]
\centering
\includegraphics[width=0.95\textwidth]{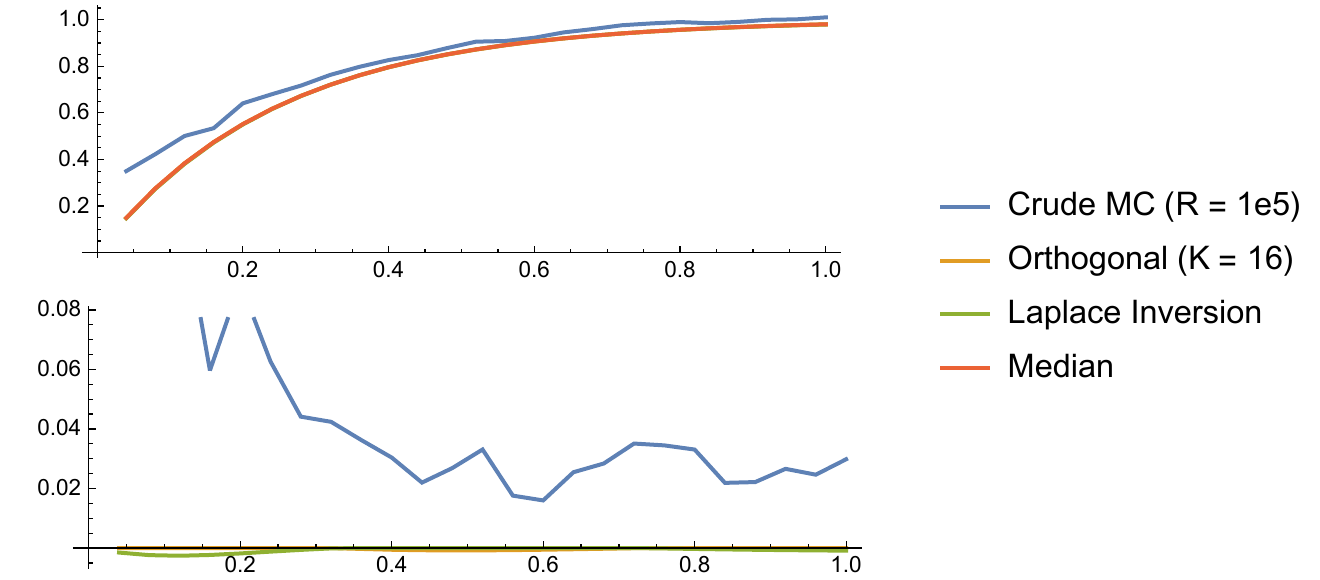}
\caption{Ruin probability $\psi(0, t)$ approximations and approximate absolute error for Test 5.}
\end{figure}

\begin{test} $\lambda=2$ and $U\sim\mathsf{Pareto}(a=5,b=11,\theta=0)$ and $c=1$
\end{test}

See the discussion of Test 3 for a description of the Laplace inversion formula's poor behaviour when Pareto variables are involved.

\begin{figure}[H]
\centering
\includegraphics[width=0.95\textwidth]{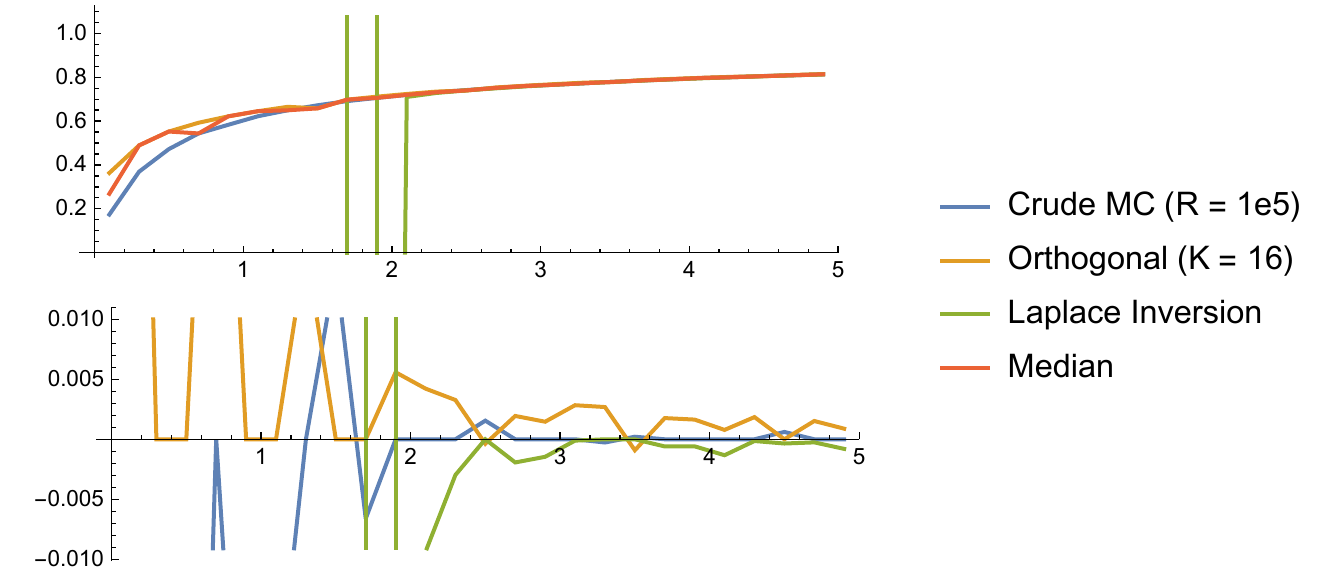}
\caption{Ruin probability $\psi(0, t)$ approximations and approximate absolute error for Test 6.}
\end{figure}
\subsection{Concluding remarks}
The orthogonal polynomial method has performed well across all the test cases studied. The accuracy is acceptable even with a rather small order of truncation $K=16$. It produces an approximation having an analytical expression, which is desirable, and in a timely manner. The precision may be improved by adding more terms in the expansions. The main drawback is probably the need for a parametrization tailored to the case studied.

The Laplace transform inversion method yields outstanding result in terms of accuracy. It failed to provide a stable approximation for Pareto distributed claim sizes. The parametrization is automatic and seems to fit the different case studied (except the Pareto one).

The main conclusion is that both methods are easy to implement and are superior to a simple truncation or a crude Monte Carlo approach.

The approximation formulas proposed in our paper may be turned into a nonparametric estimator of the density. One could substitute the coefficients within the polynomial expansion by their empirical counterparts if data were available. This extension will be at the center of a forthcoming research project.

\section*{Acknowledgments}

The authors are grateful to the reviewers for their careful reading and insightful comments.
This research was conducted both at l'Université Claude Bernard Lyon 1 and at the author's previous institutions, whose support must be acknowledged.
Pierre-Olivier Goffard was partially funded by a Center of Actuarial Excellence educational grant given to the University of California, Santa Barbara, by the Society of Actuaries. Patrick J.\ Laub was supported by an Australian Government Research Training Program Scholarship and an Australian Research Council Centre of Excellence for Mathematical \& Statistical Frontiers Scholarship.

\end{document}